\patchcmd{\section}{\scshape}{\bfseries}{}{}
\renewcommand{\@secnumfont}{\bfseries}
\DeclareMathOperator{\Hom}{Hom}
\DeclareMathOperator{\Spec}{Spec}
\DeclareMathOperator{\Frac}{Frac}
\DeclareMathOperator{\Img}{Img}
\newtheorem{mydef}{\textbf{Definition}}[section]
\newtheorem{rmk}[mydef]{\textbf{Remark}}
\newtheorem{obs}[mydef]{\textbf{Observation}}
\newtheorem{myeg}[mydef]{\textbf{Example}}
\theoremstyle{plain}
\newtheorem{mythm}[mydef]{\textbf{Theorem}}
\newtheorem{lem}[mydef]{\textbf{Lemma}}
\newtheorem{pro}[mydef]{\textbf{Proposition}}
\newtheorem{cor}[mydef]{\textbf{Corollary}}
\newtheorem*{thma}{\textbf{Theorem A}}
\newtheorem*{thmb}{\textbf{Theorem B}}
\newtheorem*{thmc}{\textbf{Theorem C}}
\newtheorem*{thmd}{\textbf{Theorem D}}
\patchcmd{\abstract}{\scshape\abstractname}{\normalsize{\textbf{\abstractname}}}{}{}
\begin{document}

\title{Valuations of semirings}
\author{Jaiung Jun}
\address{Department of Mathematical Sciences, Binghamton University, Binghamton, NY, 13902, USA}
\email{jjun@math.binghamton.edu}

\subjclass[2010]{14T99(primary), 13A18(secondary).}
\keywords{characteristic one, abstract curve, valuations of semirings, projective line.}
\thanks{}

\begin{abstract}
\normalsize{We develop notions of valuations on a semiring, with a view toward extending the classical theory of abstract nonsingular curves and discrete valuation rings to this general algebraic setting; the novelty of our approach lies in the implementation of hyperrings to yield a new definition (\emph{hyperfield valuation}). In particular, we classify valuations on the semifield $\mathbb{Q}_{max}$ (the max-plus semifield of rational numbers) and also valuations on the `function field' $\mathbb{Q}_{max}(T)$ (the semifield of rational functions over $\mathbb{Q}_{max}$) which are trivial on $\mathbb{Q}_{max}$. We construct and study the abstract curve associated to $\mathbb{Q}_{max}(T)$ in relation to the projective line $\mathbb{P}^1_{\mathbb{F}_1}$ over the field with one element $\mathbb{F}_{1}$ and the tropical projective line. Finally, we discuss possible connections to tropical curves and Berkovich's theory of analytic spaces. } 
\end{abstract}

\maketitle

\vspace*{6pt} 

\section{Introduction}
We investigate notions of valuations on semirings aiming at finding an analogue of abstract nonsingular curves over semirings. First, we recall the classical definition. Let $k$ be an algebraically closed field and $K$ be a function field of dimension $1$ over $k$, i.e., $K$ is a finitely generated field extension of $k$ of transcendence degree $1$. The abstract nonsingular curve associated to $K$ over $k$ is the set, denoted by $C_K$, of all discrete valuation rings of $K$ which are trivial on $k$ equipped with a suitable topology and a sheaf of regular functions; the discrete valuation rings form the local rings at all points of the curve. The abstract nonsingular curve $C_K$ is isomorphic to a nonsingular projective curve over $k$ $($see \cite[Theorem $6.9$, \S $1.5$]{Har}$)$. In particular, the abstract nonsingular curve $C_K$ is isomorphic to the projective line $\mathbb{P}^1_k$ if and only if $K$ is a purely transcendental extension of $k$. \\ 

A semiring generalizes a ring in such a way that one does not assume the existence of additive inverses. An example of a semiring is the set $\mathbb{N}$ of natural numbers. Another example is the set $\mathbb{R}_{max}:=\mathbb{R}\cup \{-\infty\}$ of real numbers together with the symbol $-\infty$ with the addition given by maximum and the multiplication given by the usual addition of real numbers. $\mathbb{R}_{max}$ is called the \emph{max-plus algebra} or the \emph{tropical semifield} (cf. Example \ref{rmax}). Another example on which we shall focus mainly in this paper
is $\mathbb{Q}_{max}$ which is a subsemifield of $\mathbb{R}_{max}$ consisting of rational numbers together with $-\infty$;  in our investigation
of semirings, the semifield $\mathbb{Q}_{max}$ plays a role that an algebraically closed field
$k$ assumes in the classical setting (cf. \S \ref{valuationofQmax}).  \\

Recently there has been a growing interest in semiring theory among algebraic geometers. One motivation stems from tropical geometry which can be viewed as algebraic geometry over $\mathbb{R}_{max}$ and has proved to be a very effective tool in approaching enumerative problems in algebraic geometry (cf. \cite{Gri1}). For an introduction to tropical geometry, see \cite{bernd}. \\ 
Another motivation is to develop a generalized scheme theory over more general algebraic structures such as monoids or semirings. This program is commonly known as $\mathbb{F}_1$-geometry or algebraic geometry in characteristic one. An overview of $\mathbb{F}_1$-geometry can be found in \cite{pena2009mapping}. See \cite{con5} for the mathematical meaning of working in characteristic one in connection to tropical geometry. \\
Two motivations began independently; however, they are closely related. Recently, in \cite{noah}, J.Giansiracusa and N.Giansiracusa gave a generalized scheme structure (tropical scheme structure) to a tropical variety partly by means of monoid schemes which have been studied in $\mathbb{F}_1$-geometry. \\ 

Inspired by the classical case, one may hope that valuations on semirings provide some geometric information. Specifically, our paper is motivated by the question of how the classical theory of abstract nonsingular curves can be extended to semirings. For this purpose, we first present three different definitions of semiring valuations and compute basic examples before turning to abstract curves. \\

A \emph{classical valuation} (cf. Definition \ref{classcalvaluation}) directly extends the classical notion for rings. A \emph{strict valuation} (cf. Definition \ref{semiringmorphism}) reflects the fact that for a valuation $\nu$, we have $\nu(a+b) \in \{\nu(a),\nu(b)\}$ if $\nu(a) \neq \nu(b)$; this definition was first given in \cite{izhakian2011supertropical} to investigate valuation theory on a supertropical algebra which is an algebraic structure introduced by Z.Izhakian. In \cite{tolliver2016extension}, J.Tolliver recently employed strict valuations to yield a new proof for a classical extension of valuations theorem for fields. To the best of our knowledge, this notion has not been used for the purpose
we pursue in this paper.\\
Our main innovation is to come up with a \emph{hyperfield valuation} (cf. Definition \ref{semihypval}). We make use of rather unknown algebraic structures, hyperfields, for the codomains of valuations instead of the field $\mathbb{R}$ of real numbers. Roughly speaking, a hyperfield is an algebraic object which generalizes a field by allowing addition to be multi-valued (cf. \cite{baker2016matroids}, \cite{con3}, \cite{viro}, or \S \ref{hypring} for basic notions of hyperfields). A hyperfield valuation reflects a probabilistic intuition: when $\nu(a)=\nu(b)$, the value $\nu(a+b)$ is not solely determined by $\nu(a)$ and $\nu(b)$, but $\nu(a+b) \in [-\infty,\nu(a)]$. We note that a hyperfield valuation generalizes a strict valuation in a suitable sense (see Proposition \ref{generator} and $\S \ref{tropicalcurve}$).\\

We begin by applying our idea of semiring valuations to $\mathbb{Q}_{max}$. In our framework, $\mathbb{Q}_{max}$ is a tropical replacement for an algebraically closed field $k$. To be more precise, we are primarily interested in $\mathbb{Q}_{max}$ since any tropical polynomial with coefficients in $\mathbb{Q}_{max}$ has a tropical solution in $\mathbb{Q}_{max}$, i.e., $\mathbb{Q}_{max}$ is `tropically algebraically closed'. In addition, $\mathbb{Q}_{max}$ is the smallest `tropically algebraically closed' subsemifield of $\mathbb{R}_{max}$ that contains $\mathbb{Z}_{max}$; this is crucial, because $\mathbb{Z}$ is the value group of all discrete valuation rings. We first prove the following:

\begin{thma}
There are exactly three classical valuations and two strict and hyperfield valuations on $\mathbb{Q}_{max}$ up to equivalence.
\end{thma}

Next, we turn our attention to a tropical replacement for a function field $K$. We consider the case of a projective line, i.e., $K$ is a purely transcendental extension of $k$; $K:=\Frac(k[T])=k(T)$.\\
In the classical setting, one may obtain $k(T)$ from the polynomial ring $k[T]$ by localizing at $S=k[T]-\{0\}$, thus having a canonical injection $k[T] \hookrightarrow k(T)$. But, in tropical setting, things fall apart quickly from the following reasons:
\begin{enumerate}
\item 
$\mathbb{Q}_{max}[T]$ is not multiplicatively cancellative.
\item
Even if any $f \in \mathbb{Q}_{max}[T]$ has a tropical solution in $\mathbb{Q}_{max}$, $f$ does not have to be a product of linear polynomials in $\mathbb{Q}_{max}[T]$ (see, \S \ref{valuationofQmax}).
\end{enumerate} 
We address these issues by implementing the functional equivalence relation which is used in tropical literature. Then we define the function field $\mathbb{Q}_{max}(T)$ to be the localization of $\mathbb{Q}_{max}[T]/\sim$, where $\sim$ is the functional equivalence relation, and we prove the following:

\begin{thmb}
\begin{enumerate}
\item
There are exactly two non-trivial strict valuations on $\mathbb{Q}_{max}(T)$ (which are trivial on $\mathbb{Q}_{max}$) up to equivalence.
\item
The set of equivalence classes of non-trivial hyperfield valuations on $\mathbb{Q}_{max}(T)$, which are trivial on $\mathbb{Q}_{max}$, is equal to the set $\mathbb{Q}_{max} \cup \{\infty\}$. 
\end{enumerate}
\end{thmb}

By adopting these results, we define the tropical abstract curve $C_K$ associated to $K=\mathbb{Q}_{max}(T)$ over $k=\mathbb{Q}_{max}$, and study its properties. Surprisingly, when we consider strict valuations, $C_K$ recovers all closed points of the projective line $\mathbb{P}^1_{\mathbb{F}_1}$ over the field with one element $\mathbb{F}_1$. To be precise, we prove the following:

\begin{thmc}
The abstract nonsingular curve associated to $\mathbb{Q}_{max}(T)$ over $\mathbb{Q}_{max}$ (constructed by using strict valuations) is homeomorphic to the subspace of projective line $\mathbb{P}^1_{\mathbb{F}_1}$ over $\mathbb{F}_1$ which consists of all closed points.
\end{thmc}

On the other hand, when we consider hyperfield valuations, $C_K$ recovers all closed points of the tropical projective line $\mathbb{P}^1_{\mathbb{Q}_{max}}$ over $\mathbb{Q}_{max}$. To be precise, we prove the following:

\begin{thmd}
There is a canonical bijection between the abstract nonsingular curve $C_K$ associated to $\mathbb{Q}_{max}(T)$ over $\mathbb{Q}_{max}$ (constructed by using hyperfield valuations) and the set of closed points of the tropical projective line $\mathbb{P}^1_{\mathbb{Q}_{max}}$ over $\mathbb{Q}_{max}$. Furthermore, the semiring of global sections $\mathcal{O}_{C_K}(C_K)$ is isomorphic to $\mathbb{Q}_{max}$ and $\mathcal{O}_{C_K}(U)$, where $U:=C_K -\{\infty\}$ is isomorphic to $\mathbb{Q}_{max}[T]/\sim$, where $\sim$ is the functional equivalence relation on $\mathbb{Q}_{max}[T]$. 
\end{thmd}

The above theorem sheds some light on algebraic foundation of tropical curves as abstract curves. Furthermore, in this case, an (abstract) tropical curve is equipped with a structure sheaf in a canonical way which is parallel to the classical case for rings. What makes our approach of hyperfield valuations more compelling is that in the category of hyperrings, a hyperfield valuation becomes simply a homomorphism. In other words, a hyperfield valuation on a commutative ring $A$ is just a homomorphism from $A$ to the \emph{tropical hyperfield} $\mathbb{T}$ (see Example \ref{tropicalhyperfield}). In fact, in \cite{baker2016matroids}, Baker and Bowler observe that a similar idea can be used to recast basic definitions of Berkovich's theory of analytic spaces. We discuss this aspect as well as possible applications to tropical curves in \S \ref{tropicalcurve}. 

\subsection*{Acknowledgment}
This is a part of the author's Ph.D. thesis \cite{jaiungthesis}. The author would like to thank his academic advisor Caterina Consani for suggesting this project. The author also thanks Paul Lescot for helpful comments regarding Section \ref{valuationofQmax}. Finally, the author thanks the referee for helpful comments which greatly helped the author to improve the earlier drafts.  

\section{Review: semirings, hyperrings, and the projective line $\mathbb{P}^1_{\mathbb{F}_1}$}\label{review}
\subsection{Basic definitions of semirings}\label{semirings}
\noindent The following are the basic definitions of semirings which will be used in the paper.
\begin{mydef}
By a \emph{monoid} we mean a nonempty set $M$ equipped with a binary operation $\cdot$ which satisfies the following conditions: $(1)$ $(x\cdot y)\cdot z=x\cdot(y\cdot z)$ $\forall x,y,z \in M$ and $(2)$ $\exists !$ $1_M \in M$ such that $a\cdot 1_M=1_M\cdot a$ $\forall a \in M$. If $x\cdot y=y\cdot x$ $\forall x,y \in M$, we call $M$ \emph{commutative}. When $M$ only satisfies the first condition then $M$ is called a \emph{semigroup}.
\end{mydef}
\begin{mydef}
A \emph{semiring} $(S,+,\cdot)$ is a nonempty set $S$ endowed with addition $+$ and multiplication $\cdot$ such that
\begin{enumerate}
\item
$(S,+)$ is a commutative monoid with the neutral element $0$.
\item
$(S,\cdot)$ is a monoid with the identity $1$.
\item
The operations $+$ and $\cdot$ are compatible, i.e., $x\cdot(y+z)=x\cdot y+x\cdot z \textrm{ and } (x+y)\cdot z=x\cdot z+y\cdot z \quad \forall x,y,z \in S.$
\item
$0$ is an absorbing element, i.e., $m\cdot 0=0\cdot m=0\quad \forall m \in S.$
\item
$0 \neq 1$.
\end{enumerate}
When $(S,\cdot)$ is commutative, we call $S$ a \emph{commutative semiring}. If every nonzero element of a commutative semiring $S$ is multiplicatively invertible, then $S$ is called a \emph{semifield}.
\end{mydef}

In what follows we assume that all semirings and monoids are commutative. 

\begin{mydef}
Let $S_1$ and $S_2$ be semirings. A map $\varphi:S_1 \longrightarrow S_2$ is a \emph{homomorphism} of semirings if $\forall a,b \in S_1$,
\[\varphi(a+b)=\varphi(a)+\varphi(b), \quad \varphi(ab)=\varphi(a)\varphi(b), \quad \varphi(0)=0, \quad \varphi(1)=1.\] 
\end{mydef}
\noindent By an \emph{idempotent semiring}, we mean a semiring $S$ such that $x+x=x$ $\forall x \in S$.
\begin{myeg} 
Let $\mathbb{B}:=\{0,1\}$. The addition is defined by: $1+1=1$, $1+0=0+1=1$, and $0+0=0$. The multiplication is given by $1\cdot 1=1$, $1\cdot 0=0$, and $0\cdot 0=0$. $\mathbb{B}$ is called the \emph{Boolean semifield} and it is the initial object in the category of idempotent semirings. 
\end{myeg}
\begin{myeg}\label{rmax}
The \emph{tropical semifield} $\mathbb{R}_{max}$ has the underlying set $\mathbb{R}\cup \{-\infty\}$. The addition $\oplus$ is given by $a\oplus b=\max\{a,b\}$ with the usual order of the real numbers and the smallest element $-\infty$. The multiplication $\odot$ is defined as the usual addition of $\mathbb{R}$ as follows: $a \odot b:=a+b$, where $+$ is the usual addition of real numbers and $(-\infty)\odot a=a\odot (-\infty)=(-\infty)$ $\forall a \in \mathbb{R}_{max}$. By $\mathbb{Q}_{max}$ and $\mathbb{Z}_{max}$ we mean the subsemifields of $\mathbb{R}_{max}$ with the underlying sets $\mathbb{Q}\cup \{-\infty\}$ and $\mathbb{Z}\cup \{-\infty\}$ respectively.
\end{myeg}

\begin{mydef}
Let $S$ be a semiring.
\begin{enumerate}
\item
An \emph{ideal} $I$ is a subset of $S$ such that $(I,+)$ is a sub-monoid and $aI \subseteq I$ $\forall a \in S$. An ideal $I$ is \emph{proper} if $I$ is not equal to $S$. 
\item
A \emph{prime ideal} is a proper ideal $I$ such that if $ab \in I$ then $a \in I$ or $b \in I$. 
\item
A \emph{maximal ideal} is a proper ideal $I$ which is not contained in any other proper ideals. 
\end{enumerate}
\end{mydef}

\begin{mydef}
Let $M$ be a semiring and $S$ be a multiplicative subset of $M$, i.e., for any $x,y \in S$, we have $xy \in S$. One imposes the following equivalence relation $\sim$ on $M\times S$:
\[
(a,s) \sim (b,t) \iff \exists q\in S \textrm{ such that } qat=qsb.
\]
We denote the equivalence class of $(a,s)$ by $\frac{a}{s}$. The \emph{localization} $S^{-1}M$ of $M$ at $S$ is defined as the set $\{\frac{a}{s} \mid a \in M, s \in S\}$ of equivalence classes. Then the semiring structure of $S$ descends to $S^{-1}M$ as follows:
\[
\frac{a}{s}+\frac{a'}{s'}:=\frac{as'+a's}{ss'}, \quad \frac{a}{s}\cdot \frac{a'}{s'}:=\frac{aa'}{ss'}. 
\]
\end{mydef}
Recall that a semiring $M$ is (multiplicatively) \emph{cancellative} if for $a,b \in M, c \neq 0_M \in M$, $a\cdot c=b \cdot c$ implies that $a=b$. When $M$ is multiplicatively cancellative, the localization map $S^{-1}:M \longrightarrow S^{-1}M$ sending $m$ to $\frac{m}{1}$ is injective. We refer the readers to \cite{semibook} for an introduction to the theory of semirings.
\subsection{Basic definitions of hyperrings}\label{hyperrings}
\noindent In this section, we give a very brief introduction to the theory of hyperrings. In a word, a hyperring is an algebraic object which generalizes a ring by allowing addition to be \emph{multi-valued}.\\
A hyper-operation on a nonempty set $H$ is a map
$+ : H \times H \rightarrow \mathcal{P}(H)^{*}$, where $\mathcal{P}(H)^*$ is the set of nonempty subsets of $H$. For any subsets $A, B\subseteq H$, we define $A+B:=\bigcup_{a\in A, b\in B}(a+b)$. Also, for $x,y \in H$, if the set $x+y$ consists of a single element $z$, we write $x+y=z$ rather than $x+y=\{z\}$.

\begin{mydef}$($cf.\cite{con3}$)$\label{hypgroup}
A \emph{canonical hypergroup} $(H,+)$ is a nonempty pointed set with a hyper-operation $+$ which satisfies the following properties:
\begin{enumerate}
\item
$x+y=y+x\quad  \forall x,y \in H. \quad \text{(commutativity)}$
\item
$(x+y)+z=x+(y+z)\quad  \forall x, y, z\in H. \quad \text{(associativity)}$
\item
$\exists ! 0 \in H\textrm{ such that }0+x=x=x+0\quad \forall x\in H. \quad \text{(neutral element)}$
\item
$\forall x\in H \quad \exists  !  y(:=-x)\in H  \text{ such that }  0\in x+y. \quad \text{(unique inverse)}$
\item
$x \in y+z \Longrightarrow z \in x-y. \quad \text{(reversibility)}$
\end{enumerate}
\end{mydef}

\noindent The definition of a hypergroup is more general than that of a canonical hypergroup. For example, reversibility and commutativity are not assumed. However, in this paper, by a hypergroup we always mean a canonical hypergroup.

\begin{mydef}$($cf.\cite{con3}$)$\label{hypring}
A \emph{hyperring} $(R, + ,\cdot)$ is a nonempty set $R$ with a hyper-operation $+$ and a usual multiplication $\cdot$ such that $(R,+)$ is a canonical hypergroup, $(R,\cdot)$ is a commutative monoid, and two operations are distributive, i.e., $x\cdot (y+z)=x\cdot y +x \cdot z$ (as sets) for all $x,y,z \in R$. When $(R\setminus \{ 0 \}, \cdot)$ is an abelian group, we call $(R, +, \cdot)$ a \emph{hyperfield}.
\end{mydef}

\begin{rmk}
Strictly speaking, Definition \ref{hypring} is the definition of \emph{Krasner hyperring}. The definition of a hyperring is more general. However, since we will be only using Definition \ref{hypring}, by a hyperring we will always mean Definition \ref{hypring}. For this reason, from now on, we will simply say a \emph{hyperaddition} for a \emph{hyper-operation}.
\end{rmk}

\begin{mydef}$($cf.\cite{con3}$)$
For hyperrings $R_1$ and $R_2$, 
a map $ f : R_1 \longrightarrow R_2$ is called a \emph{homomorphism} of hyperrings if
\begin{enumerate}
\item
$f(a+b) \subseteq f(a)+f(b) \quad \forall a, b \in R_1$.
\item
$f(a\cdot b) = f(a)\cdot f(b) \quad \forall a, b \in R_1$.
\item
We call $f$ \emph{strict} if, moreover, $f(a+b)= f(a)+f(b) \quad \forall a, b \in R_1$.
\end{enumerate}
\end{mydef}

\begin{myeg}$($cf.\cite{con3}$)$
\begin{enumerate}
\item
Let $\mathbf{K}:=\{0,1\}$. The multiplication is given by 
\[1\cdot 1=1, \quad 0\cdot 1=1 \cdot 0=0,\] 
and the hyperaddition is given by 
\[0+1=\{1\},\quad  0+0=\{0\}, \quad  1+1=\{0,1\}.\]
Then $(\mathbf{K},+,\cdot)$ is a hyperfield called the \emph{Krasner hyperfield}.
\item
Let $\mathbf{S} := \{-1,0,1\}$. The multiplication is given by
\[1\cdot 1=(-1)\cdot(-1)=1,\quad  (-1)\cdot 1=(-1),\quad  a\cdot 0=0 \quad \forall a \in \mathbf{S}.\]
The hyperaddition $+$ is given by
\[0+0=0, 1+0=1+1=1, (-1)+0=(-1)+(-1)=(-1), 1+(-1)=\mathbf{S}.\]
We call $\mathbf{S}$ the \emph{hyperfield of signs}.
\end{enumerate}
\end{myeg}
\noindent One can easily obtain hyperrings from commutative rings as follows.

\begin{mythm}$($cf.\cite[Proposition $2.6$]{con3}$)$
Let $A$ be a commutative ring and $G \subseteq A^\times$ be a subgroup of the multiplicative group $A^\times$ of units. Then $G$ induces an equivalence relation $\sim$ on $A$: 
\[
x\sim y \textrm{ if and only if } x=gy \textrm{ for some } g \in G.
\]
 Then, the set $A/G$ (of equivalence classes of $\sim$) is a hyperring with the following operations:
\begin{enumerate}
\item
$xG\cdot yG := xyG \quad \forall x, y \in A$. (multiplication)
\item
$xG+yG:= \{zG \mid z=xa+yb \textrm{ for some }a,b \in G\} \quad \forall x, y \in A$. (hyperaddition)
\end{enumerate}
A hyperring which arises in this way is called a quotient hyperring.
\end{mythm}
Note that, for a field $k$ with $|k|\geq 3$, we can identify the Krasner hyperfield $\mathbf{K}$ with the quotient hyperring $k/k^\times$.\\
In \cite{con3}, Connes and Consani constructed a crucial link between a quotient hyperring and an incidence geometry. More precisely, they showed that for a hyperfield extension $\mathbb{H}$ of the Krasner hyperfield $\mathbf{K}$ one can canonically associate an (incidence) projective geometry $\mathcal{P}$. Moreover, if $\mathcal{P}$ is Desarguesian of dimension $\geq 2$, then there exists a unique pair $(F,K)$ of a field $F$ and a subfield $K \subseteq F$ such that $\mathbb{H} \simeq F/K^\times$. Furthermore, the open problem of classifying all finite hyperfield extensions of $\mathbf{K}$ is equivalent to the abelian case of the conjecture on the existence of Non-Desarguesian finite projective planes with a simply transitive group $G$ of collineations.

\begin{rmk}
In \cite{izhakian2014layered}, Z.Izhakian, M.Knebusch and L.Rowen generalize their supertropical algebras by implementing a \emph{layered structure}. The authors also provided an example (\cite[Example 11.6]{izhakian2014layered}) which shows that layered tropical algebras have certain connection to hyperfields.
\end{rmk}

\begin{rmk}
There is another algebraic structure called a \emph{fuzzy ring}, which generalizes a commutative ring, first introduced by A.Dress in \cite{dress1986duality} to unify various generalizations of matroids. In \cite{giansiracusa2016relation}, together with J.Giansiracusa and O.Lorscheid, the author constructs a faithful functor from the category of hyperrings to the category of fuzzy rings and prove that the notion of matroids over hyperfields, introduced by Baker and Bowler in \cite{baker2016matroids}, is essentially the same thing as the notion of matroids over fuzzy rings.  
\end{rmk}

\subsection{Projective line $\mathbb{P}^1_{\mathbb{F}_1}$} \label{char}
We review the definition of the projective line $\mathbb{P}^1_{\mathbb{F}_1}$ over $\mathbb{F}_1$ introduced in \cite{Deitmar} by A.Deitmar. We refer the readers to \cite{Deitmar} and \cite{deitmar2008f1} for the theory of monoid schemes. 

\begin{rmk}
Deitmar does not assume that a monoid has an absorbing element $0$, however, we use a monoid with an absorbing element. In other words, we assume that a monoid $(M,\cdot,1_M)$ has an absorbing element $0_M$ such that $m\cdot 0_M=0_M$ for all $m \in M$. This makes no difference as one can always adjoin an absorbing element to a monoid. 
\end{rmk}

An \emph{ideal} of a monoid $M$ is a subset $I$ of $M$ such that \[
IM:=\{i\cdot m \mid i \in I, \textrm{ } m \in M\} \subseteq I.
\]
In particular, this implies that an ideal $I$ contains $0_M$. A \emph{prime ideal} of $M$ is an ideal $P$ such that if $xy \in P$ then $x \in P$ or $y \in P$. Let $\Spec M$ be the set of prime ideals of $M$. One can endow Zariski topology and a structure sheaf to $\Spec M$ as in the classical case for commutative rings. We remark that one defines a scalar extension from monoids to rings as $M \otimes _{\mathbb{F}_1}\mathbb{Z}:=\mathbb{Z}[M]$, a monoidal ring, and this induces a continuous map from $\Spec \mathbb{Z}[M]$ to $\Spec M$. We refer the readers to \cite{noah} for more details on how these scalar extension functors can be used to link $\mathbb{F}_1$-geometry and tropical geometry. 
\begin{rmk}
It seems that mathematicians outside of $\mathbb{F}_1$-geometry usually consider that a monoid scheme (in \cite{Deitmar}) is the only approach to $\mathbb{F}_1$-geometry. However, the theory of monoid schemes is `a minimal approach' to $\mathbb{F}_1$-geometry, for instance see \cite{pena2009mapping}. There are other approaches which also generalize algebraic geometry over semirings. For example, see \cite{durov2007new}, \cite{oliver1}, \cite{toen2009dessous}. Connes and Consani have developed a notion of algebraic geometry in characteristic one (this is essentially algebraic geometry over idempotent algebraic structures) and linked it to $\mathbb{F}_1$-geometry. See, for example, \cite{con5}, \cite{con7}. 
\end{rmk}
\begin{myeg}
Let $M=\{1=t^0,t,t^2,...,t^n,...\} \cup \{0\}$, where $0\cdot t^n=0$ for all $n \geq 0$. We have $\Spec M =\{p_0,p_1\}$, where $p_0=\{0\}$ and $p_1=\{0,t,t^2,...\}$. One can easily see that $p_1$ is the only closed point of $\Spec M$.
\end{myeg}

Now, we review the construction of $\mathbb{P}^1_{\mathbb{F}_1}$. Let $C_{\infty}:=\{...,t^{-1},1,t,...\} \cup \{0\}$ be an infinite cyclic group generated by $t$ together with an absorbing element $0$ and let $C_{\infty,+}:=\{1,t,t^2,...\} \cup \{0\}$, $C_{\infty,-}:=\{1,t^{-1},t^{-2},...\} \cup \{0\}$ be sub-monoids of $C_{\infty}$. One can observe that 
\[U:=\Spec (C_{\infty})=\{p_0\}, \quad p_0=\{0\},\]
\[U_+:=\Spec (C_{\infty,+})=\{q_0,q_+\}, \quad q_0=\{0\},\textrm{ }q_+=\{0,t,t^2,...\},
\]
\[
U_-:=\Spec (C_{\infty,-})=\{r_0,r_-\}, \quad r_0=\{0\},\textrm{ }r_-=\{0,t^{-1},t^{-2},...\}.
\]
One defines the projective line $\mathbb{P}^1_{\mathbb{F}_1}$ over $\mathbb{F}_1$ by gluing $U_+$ and $U_-$ along $U$. The topological space $U_+$ has two points, the generic point $q_0$ and the closed point $q_+$ containing $t$. Similarly, the topological space $U_-$ has two points, the generic point $r_0$ and the closed point $r_-$ containing $t^{-1}$. Hence, the projective line $\mathbb{P}^1_{\mathbb{F}_1}$ over $\mathbb{F}_1$ consists of three points $\{q_+,p_0,r_-\}$. For the details, we refer the readers to \cite{Deitmar}.

\section{Valuation theory over semirings}\label{valuationtheorysection}
Throughout the paper, we assume all semirings are commutative unless stated otherwise. Recall that an \textit{idempotent} semiring is a semiring $S$ such that $x+x=x$ $\forall x\in S$. An idempotent semiring $S$ has a canonical partial order $\leq$ such that $x \leq y \iff x+y=y$. By a totally ordered idempotent semiring, we always mean a semiring $S$ which is totally ordered with respective to the above canonical order.\\
Theory of valuations on semirings was also considered in \cite{izhakian2011supertropical}(or \cite{izavalue}) by Izhakian, Knebusch, and Rowen. The authors develop a supervaluation theory and recast some basic result (Kapranov's lemma) in terms of a supervaluation theory. In fact, their result is similar to our observation on relations between theory of valuations on semirings and tropical curves in \S \ref{tropicalcurve}.\\ 
In this paper, our novelty lies in considering abstract nonsingular curves in the framework of semirings as well as the implementation of hyperfields into valuation theory on semirings to yield a new concept, \emph{hyperfield valuations}. Now, we provide three different definitions.

\begin{mydef}\label{classcalvaluation}
Let $S$ be a semiring. A \emph{classical valuation} on $S$ is a function $\nu:S \longrightarrow \mathbb{R} \cup \{-\infty\}$ which satisfies the following conditions:
\begin{enumerate}
\item
$\nu(0_S)=-\infty $.
\item
$\nu(xy)=\nu(x)+\nu(y)$, where $+$ is the usual addition of $\mathbb{R}$.
\item
$ \nu(x+y)\leq \max\{\nu(x),\nu(y)\}$ $\forall x,y \in S$.
\item
$\nu(S) \neq \{-\infty\}$. 
\end{enumerate}
\end{mydef}

\begin{rmk}
\begin{enumerate}
\item
We note that one may use the minimum convention, $\min\{\nu(x),\nu(y)\}$, for the third condition which is equivalent to the maximum convention once we reverse the order of $\mathbb{R}$ and change $\{-\infty\}$ to $\{\infty\}$. We use the maximum convention to be consistent with the other two definitions of valuations. 
\item 
The third condition is redundant in some cases. For example, if $S$ is a totally ordered idempotent semiring, then $x+y \in \{x,y\}$ $\forall x,y \in S$ and hence the third condition always holds.
\end{enumerate}
\end{rmk}

\begin{mydef}(\cite[Definition 2.2]{izavalue})\label{semiringmorphism}
Let $S$ be a semiring. A \emph{strict valuation} on $S$ is a function $\nu:S \longrightarrow \mathbb{R}_{max}$ which satisfies the following conditions:
\begin{enumerate}
\item
$\nu(0_S)=-\infty $.
\item
$\nu(xy)=\nu(x)+\nu(y)$, where $+$ is the usual addition of $\mathbb{R}$.
\item
$\nu(x+y)=\max \{\nu(x),\nu(y)\}$ $\forall x,y \in S$.
\item
$\nu(S) \neq \{-\infty\}$. 
\end{enumerate}
In other words, a strict valuation $\nu$ is a non-trivial homomorphism of semirings from $S$ to the tropical semifield $\mathbb{R}_{max}$.
\end{mydef}
\begin{rmk}
Note that $\ker(\nu)=\{0_S\}$ does not imply that $\nu$ is injective.
\end{rmk}
As we mentioned earlier, Definition \ref{semiringmorphism} can be justified in the sense that for a valuation $\nu$ on a commutative ring, we have $\nu(a+b) \in \{\nu(a),\nu(b)\}$ when $\nu(a) \neq \nu(b)$. Classically, the third condition is a subadditivity condition. However, we force the third condition to be an additivity condition. It seems that this is the reason why the authors of \cite{izhakian2011supertropical} named it a strict valuation. One can think of the similar generalization over a hyperfield. To this end, we introduce the following hyperfield.

\begin{mydef}\label{tropicalhyperfield}
The \emph{tropical hyperfield} $\mathbb{T}$ has the underlying set $\mathbb{R} \cup \{-\infty\}$. The addition $\oplus$ is defined as follows: for $x,y \in \mathbb{T}$, 
\[
x\oplus y =\left\{ \begin{array}{ll}
\max\{x,y\} & \textrm{if $x\neq y$}\\
\left[-\infty,x\right]& \textrm{if $x=y$}
\end{array} \right.
\]
The multiplication $\odot$ is given by the usual addition of real numbers with $a\odot(-\infty)=-\infty$ for $\forall a \in \mathbb{R}\cup \{-\infty\}$.
\end{mydef}

\begin{rmk}\label{hyprmk}
The addition of $\mathbb{T}$ is designed to capture the information we lose when $\nu(x)=\nu(y)$ since, in this case, $\nu(x+y)$ can be any number less than or equal to $\nu(x)$. 
\end{rmk}
\begin{pro}
$\mathbb{T}$ is a hyperfield.
\end{pro}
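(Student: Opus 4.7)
The plan is to verify, in turn, the canonical hypergroup axioms for $(\mathbb{R}_{+,val},\oplus)$, the abelian group structure on $(\mathbb{R}_{+,val}\setminus\{-\infty\},\odot)$, and the distributive law linking the two. The multiplicative part is essentially free of content, since $\odot$ restricted to $\mathbb{R}$ is the usual addition and thus supplies an abelian group with identity $0$, while $a\odot(-\infty)=-\infty$ makes $-\infty$ absorbing. The substantive work lies in the hypergroup axioms and in distributivity.

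First I would identify the additive neutral element and the inverses. The element $-\infty$ is neutral: for $x\neq -\infty$ one has $x\oplus(-\infty)=\max\{x,-\infty\}=\{x\}$, and $-\infty\oplus(-\infty)=[-\infty,-\infty]=\{-\infty\}$. Every element is its own hyperinverse, because $a\oplus a=[-\infty,a]\ni-\infty$; uniqueness comes from the observation that $-\infty\in a\oplus b$ with $a\neq b$ forces $\max\{a,b\}=-\infty$, so $a=b=-\infty$.

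Next I would verify associativity of $\oplus$ by a short case analysis on the multiset $\{a,b,c\}$, extending $\oplus$ to subsets by union in the usual way. When $a,b,c$ are pairwise distinct, both triple sums collapse to $\{\max\{a,b,c\}\}$. When exactly two coincide, say $a=b\neq c$, a direct computation using the auxiliary identity
\[
[-\infty,x]\oplus y=\begin{cases}\{y\}&\text{if } y>x,\\ [-\infty,x]&\text{if } y\leq x,\end{cases}
\]
shows both sides equal $\{c\}$ when $c>a$ and $[-\infty,a]$ when $c<a$; the case $a=b=c$ reduces both sides to $[-\infty,a]$. Reversibility, namely $c\in a\oplus b\Longleftrightarrow a\in c\oplus b$ (using $-b=b$), then falls out of the same list of cases.

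Finally, for distributivity I would note that for $a\in\mathbb{R}$ the translation $\phi_a(x)=a\odot x=a+x$ is an order-preserving bijection of $\mathbb{R}_{+,val}$ fixing $-\infty$; consequently it commutes with $\max$ and with the formation of the downward interval $[-\infty,\,\cdot\,]$. This immediately yields $a\odot(b\oplus c)=(a\odot b)\oplus(a\odot c)$ in both subcases $b\neq c$ and $b=c$, and the case $a=-\infty$ makes both sides equal to $\{-\infty\}$. The main obstacle is the bookkeeping in the associativity step: one has to track how the interval $[-\infty,x]$ absorbs or is absorbed by a further summand, but once the auxiliary identity above is isolated, the remaining verifications are routine comparisons.
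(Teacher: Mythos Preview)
Your argument is correct: the axiom-by-axiom verification goes through, and your auxiliary identity
\[
[-\infty,x]\oplus y=\begin{cases}\{y\}&y>x,\\ [-\infty,x]&y\leq x,\end{cases}
\]
is the right tool for collapsing the associativity and reversibility case analyses. One small cosmetic point: when you reduce to the case ``exactly two coincide, say $a=b\neq c$'', you are implicitly using commutativity to cover the remaining patterns $a=c\neq b$ and $b=c\neq a$; it would do no harm to say so explicitly.

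Your approach differs from the paper's, however. The paper does not carry out any verification at all; it simply invokes a more general result of Viro, where a family of hyperfields containing $\mathbb{R}_{+,val}$ is constructed and the axioms checked once and for all. So the trade-off is clear: your route is self-contained and tailored to this one object, at the cost of a few paragraphs of case-checking; the paper's route is a one-line citation, at the cost of sending the reader elsewhere for the actual content. Neither supplies any idea the other lacks---the underlying computation is the same---but your write-up would be the appropriate choice if one wanted the article to stand on its own for this point.
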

\begin{proof}
A more generalized result is proven in \cite{viro}.
\end{proof}

\noindent Next, we define a valuation of a semiring with values in $\mathbb{T}$. We will call this valuation as a \emph{hyperfield valuation}.

\begin{mydef}\label{semihypval}
Let $S$ be a semiring. A \emph{hyperfield valuation} on $S$ is a function $\nu:S \longrightarrow \mathbb{T}$ such that $\nu(S) \neq \{-\infty\}$ and satisfies the following conditions:
\begin{equation}\label{valuesinH}
\nu(x +y) \in \nu(x) \oplus \nu(y), \textrm{ } \nu(xy)=\nu(x) \odot \nu(y), \textrm{ } \nu(0_S)=-\infty.
\end{equation}

\end{mydef}

\begin{mydef}\label{equivalenceval}
Let $\nu_1$ and $\nu_2$ both be classical valuations (Definition \ref{classcalvaluation}) or strict valuations (Definition \ref{semiringmorphism}) or hyperfield valuations (Definition \ref{semihypval}) on a semiring $S$. We say that $\nu_1$ and $\nu_2$ are \emph{equivalent} if there exists a real number $\rho >0$ such that \[\nu_1(x)=\rho \nu_2(x) \textrm{ }\forall x \in S,\] 
where $\rho \nu_2(x)$ is the usual multiplication of real numbers.
\end{mydef}

\begin{mydef}
Let $\nu$ be any of three valuations on a semiring $S$. We say that $\nu$ is a \emph{discrete valuation} if $\nu(S-\{0_S\}) \simeq \mathbb{Z}$. 
\end{mydef}

\noindent Next, we prove several propositions which are analogous to the classical results. 

\begin{pro}\label{1lem}
Let $S$ be a semiring. Let $\nu$ be any of three valuations on $S$. Then $\nu(1_S)=0$. 
\end{pro}
\begin{proof}
Since $\nu(1_S)=\nu(1_S\cdot 1_S)=\nu(1_S)+\nu(1_S)$, $\nu(1_S)$ should be either $0$ or $-\infty$. However, when $\nu(1_S)=-\infty$, we have $\nu(S)=\{-\infty\}$ and hence $\nu(1_S)=0$. 
\end{proof}

\begin{pro}\label{domain}
Let $S$ be a semifield. Let $\nu$ be any of three valuations on $S$. Then we have
\[
\nu(x)=-\infty \iff x=0_S.
\] 
\end{pro}
\begin{proof}
By the definitions, we have $\nu(0_S)=-\infty$. Conversely, suppose that $\nu(x)=-\infty$ and $x \neq 0_S$. Since $S$ is a semifield, we have $0=\nu(1_S)=\nu(x\cdot x^{-1})=\nu(x)+\nu(x^{-1})$ and this is a contradiction. Therefore, $x=0_S$. 
\end{proof}

\begin{rmk}
Proposition \ref{domain} is valid when $S$ is a multiplicatively cancellative idempotent semiring. The result is obtained by using Proposition \ref{valuefraction}.
\end{rmk}

\begin{pro}\label{valuationring}
Let $S$ be a semiring and $\nu$ be any of three valuations. Let us define the following sets:
\[
R:=\{x \in S \mid \nu(x) \leq 0\} \cup \{0_S\}, \quad\mathfrak{m}:=\{x \in S \mid \nu(x) < 0\} \cup\{0_S\}.
\]
Then $R$ is a subsemiring of $S$ and $\mathfrak{m}$ is an ideal of $R$. We call this $R$ the valuation ring of $\nu$.
\end{pro}
\begin{proof}
Suppose that $\nu$ is a classical valuation. Since $\nu(1_S)=0$, we have $1_S \in R$ and $0_S \in R$ by the definition. For any $x,y \in R$, we have $\nu(xy)=\nu(x)+\nu(y) \leq 0$, thus $xy \in R$. Furthermore, since $\nu(x+y)\leq \max\{\nu(x),\nu(y)\} \leq 0$, we have $x+y \in R$. By the similar argument one can show that $\mathfrak{m}$ is an ideal of $R$. When $\nu$ is either a strict valuation or a hyperfield valuation, one can similarly prove. 
\end{proof}

\begin{cor}\label{equihassamerings}
Let $S$ be a semifield. Let $\nu_1$ and $\nu_2$ be valuations both defined by any of Definitions \ref{classcalvaluation}, \ref{semiringmorphism}, \ref{semihypval}. Let $R_1$ and $R_2$ be the valuation rings of $\nu_1$ and $\nu_2$ respectively. If $\nu_1$ and $\nu_2$ are equivalent then $R_1=R_2$.
\end{cor}
\begin{proof}
This is clear from the definition.
\end{proof}

\begin{cor}\label{cor2}
Let $S$ be a semifield and $\nu$ be a valuation on $S$ (defined by any of Definitions \ref{classcalvaluation}, \ref{semiringmorphism}, \ref{semihypval}). Then an ideal $\mathfrak{m}$ as in Proposition \ref{valuationring} is a unique maximal ideal of the valuation ring $R$. 
\end{cor}
\begin{proof}
In any case, if $\nu(a)=0$ then $a \neq 0$. Therefore, there exists $a^{-1} \in S$. Furthermore, $\nu(a)+\nu(a^{-1})=\nu(aa^{-1})=\nu(1_S)=0$. In other words, $a$ is invertible and $a^{-1} \in R$. Hence $\mathfrak{m}$ is a unique maximal ideal of $R$.
\end{proof}

The following proposition shows that a hyperfield valuation is in fact a generalization of a strict valuation.
\begin{pro}\label{generator}
Let $S$ be a semiring. Suppose that $\nu:S \longrightarrow \mathbb{R}\cup \{-\infty\}$ is a strict valuation. Then $\nu$ is also a hyperfield valuation.
\end{pro}
\begin{proof}
We first note that since strict valuations and hyperfield valuations are functions from $S$ to the set $\mathbb{R} \cup \{-\infty\}$ which satisfy some conditions, the above statement makes sense. Since $\nu$ is a strict valuation and the multiplicative structure of $\mathbb{T}$ is same as that of $\mathbb{R}_{max}$, we have $\nu(xy)=\nu(x)\odot\nu(y)$. We also clearly have $\nu(0_S)=-\infty$ and $\nu(S)\neq \{-\infty\}$. All it remains to show is that $\nu(f+g) \in \nu(f)\oplus \nu(g)$ for any $f,g \in S$. Since $\nu$ is a strict valuation, we have $\nu(f+g)=\max\{\nu(f),\nu(g)\}$. If $\nu(f)=\nu(g)$ then we have $\nu(f+g)=\max\{\nu(f),\nu(g)\} \in \left[-\infty, \nu(f) \right]$. If $\nu(f) \neq \nu(g)$ then the addition of $\mathbb{T}$ agrees with the addition of $\mathbb{R}_{max}$ and hence we have $\nu(f+g) \in \nu(f)\oplus \nu(g)$.
\end{proof}

\section{Examples} \label{more}

\subsection{ The first example, $\mathbb{Q}_{max}$}
\vspace{5pt}
\begin{pro}\label{classQ_max}
Let $S=\mathbb{Q}_{max}$. Then,
\begin{enumerate}
\item
The set of classical valuations on $S$ is equal to $\mathbb{R}$. There are exactly three classical valuations on $S$ up to equivalence.
\item
The set of strict valuations on $S$ is equal to $\mathbb{R}_{\geq 0}$. There are exactly two strict valuations on $S$ up to equivalence.
\item
The set of hyperfield valuations on $S$ is equal to $\mathbb{R}_{\geq 0}$. There are exactly two hyperfield valuations on $S$ up to equivalence.
\end{enumerate}
\end{pro}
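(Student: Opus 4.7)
My plan is to reduce each of the three statements to a simple question about the single real parameter $c:=\nu(1)$, where $1$ denotes the rational number zero, which is the multiplicative identity of $\mathbb{Q}_{max}=\mathbb{Q}\cup\{-\infty\}$. The key observation is that the group of non-zero elements of $\mathbb{Q}_{max}$ under multiplication is literally the additive group $(\mathbb{Q},+)$. Consequently, in each of the three definitions the multiplicative condition $\nu(xy)=\nu(x)+\nu(y)$ (in (3), $\odot$ restricts to the usual sum on $\mathbb{R}$) amounts to saying that $\nu|_{\mathbb{Q}}\colon(\mathbb{Q},+)\to(\mathbb{R},+)$ is a group homomorphism. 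Because $\mathbb{Q}$ is divisible, any such homomorphism is automatically $\mathbb{Q}$-linear and so is uniquely determined by $c=\nu(1)$, giving $\nu(x)=cx$ on $\mathbb{Q}$ together with $\nu(0_M)=\pm\infty$. This reduces each of the three classification problems to identifying which values of $c\in\mathbb{R}$ are compatible with the corresponding additive condition.

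For (1), writing $x+y=\max(x,y)$ in $\mathbb{Q}_{max}$, the axiom reads $\min(cx,cy)\le c\max(x,y)$: when $c\ge 0$ the left side equals $c\min(x,y)$, and when $c<0$ it equals $c\max(x,y)$, so the inequality is automatic for every $c\in\mathbb{R}$ and the set of valuations is all of $\mathbb{R}$. For (2), the strict equality $c\max(x,y)=\max(cx,cy)$ forces $c\ge 0$, yielding $\mathbb{R}_{\ge 0}$. For (3), I split the hyperaddition into its two branches: if $cx\neq cy$ then $cx\oplus cy=\max(cx,cy)$ and the condition again forces $c\ge 0$, whereas if $cx=cy$ then $cx\oplus cy=[-\infty,cx]$ and the requirement $c\max(x,y)\le cx$ is automatic. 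Hence once more $c\in\mathbb{R}_{\ge 0}$.

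Finally, passing to equivalence under positive rescaling $\nu\mapsto\rho\nu$ preserves the sign of $c$, so case (1) has three equivalence classes according as $c>0$, $c=0$, or $c<0$, while cases (2) and (3) each have two classes corresponding to $c=0$ and $c>0$. The only step requiring genuine thought is the reduction in the first paragraph, where divisibility of $\mathbb{Q}$ is used to upgrade $\mathbb{Z}$-linearity to $\mathbb{Q}$-linearity and thereby pin the entire valuation down by a single real constant; after that the proof reduces to elementary inequality checks on $c$.
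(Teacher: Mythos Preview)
Your argument follows essentially the same route as the paper: parametrize by a single real number via the multiplicative condition (the paper computes $\nu(m/n)=\tfrac{m}{n}\nu(1)$ by hand, you invoke divisibility of $\mathbb{Q}$), then check the additive axiom case by case, then quotient by positive rescaling. The inequality checks you give for each of the three definitions are correct and amount to the same observations the paper makes (order-preservation in cases (2) and (3), redundancy of the subadditivity condition in case (1)).

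There is, however, a genuine slip in your first sentence: you set $c:=\nu(1)$ and then say ``$1$ denotes the rational number zero, which is the multiplicative identity of $\mathbb{Q}_{max}$.'' The multiplicative identity $1_M$ of $\mathbb{Q}_{max}$ is indeed the rational number $0$, but $\nu(1_M)=0$ automatically in all three definitions (from $\nu(1_M\odot 1_M)=\nu(1_M)+\nu(1_M)$), so that choice cannot parametrize anything. What you actually use throughout, and what the paper uses, is $c=\nu(1_{\mathbb{Q}})$, the value at the rational number $1$; this is the correct generator for the $\mathbb{Q}$-linear map $x\mapsto cx$. Once that is fixed, your proof is complete and matches the paper's.
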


\begin{proof}
To avoid any possible confusion, let us denote by $\oplus$, $\odot$ the addition and the multiplication of $S$ respectively. Also the additive and the multiplicative identities of $S$ will be denoted by $0_S$ and $1_S$.
\begin{enumerate}
\item
In this case, as we previously remarked, the third condition is redundant since $S$ is totally ordered. We claim that any valuation $\nu$ on $S$ only depends on the value $\nu(1)$. In fact, since $\mathbb{Z}$ is (multiplicatively) generated by $1$ in $\mathbb{Q}_{max}$, it follows from the second condition that the value $\nu(1)$ determines $\nu(m)$ $\forall m \in \mathbb{Z}$. Moreover, for $\frac{1}{n}$, we have $\nu(1)=\nu(\frac{1}{n}\odot\cdots\odot\frac{1}{n})=n\nu(\frac{1}{n})$ and hence $\nu(\frac{1}{n})=\frac{1}{n}\nu(1)$. This implies that for $\frac{m}{n} \in \mathbb{Q}$, we have $\nu(\frac{m}{n})=\frac{m}{n}\nu(1)$. Conversely, let $\nu:S \longrightarrow \mathbb{R}\cup \{-\infty\}$ be a function such that $\nu(\frac{a}{b}):=\frac{a}{b}\nu(1)$ for some $\nu(1) \neq -\infty$. Then, clearly $\nu$ is a classical valuation on $S$. It follows that the set of classical valuations on $S$ is equal to $\mathbb{R}$.\\ 
Next, suppose that $\nu_1$, $\nu_2$ are classical valuations on $S$ such that $\nu_1(1)>0,\nu_2(1)>0$, then they are equivalent. In fact, if we take $\rho:=\frac{\nu_1(1)}{\nu_2(1)}$, then for $x \in \mathbb{Q}_{max}\backslash\{-\infty\}$, we have $\nu_1(x)=x\nu_1(1)=x\rho\nu_2(1)=\rho\nu_2(x)$. Similarly, classical valuations $\nu_1$ and $\nu_2$ on $S$ with $\nu_i(1) <0$ are equivalent. Finally, $\nu(1)=0$ gives the trivial valuation. Therefore, we have exactly three classical valuations up to equivalence. 
\item
In this case, we claim that a strict valuation $\nu$ is an order-preserving map. Indeed, we have $x \leq y \Longleftrightarrow x\oplus y=y$. Suppose that $x \leq y$. Then we have $\nu(y)=\nu(x\oplus y)=\max\{\nu(x),\nu(y)\} \Longleftrightarrow \nu(x)\leq \nu(y)$. On the other hand, as in the above case, a strict valuation $\nu$ only depends on $\nu(1)$. Since $\nu$ is an order-preserving map and $\nu(0)=0$, it follows that $\nu(1) \geq 0$. Therefore, the set of strict valuations on $S$ is equal to $\mathbb{R}_{\geq 0}$. Moreover, if $\nu(1)=0$, then we have the trivial valuation and strict valuations $\nu$ on $M$ such that $\nu(1) >0$ are equivalent as in the above case. Thus, in this case, there are exactly two strict valuations on $S$ up to equivalence.
\item
We let $\boxplus$ and $\boxdot$ be the addition and the multiplication of $\mathbb{T}$ to avoid any confusion. In this case, a hyperfield valuation $\nu$ on $S$ is order-preserving and determined by $\nu(1)$ and $\nu(1) \geq 0$. In fact, suppose that $x\leq y$. Then we have 
\begin{equation}\label{orderpreserving}
\nu(x\oplus y)=\nu(y) \in \nu(x) \boxplus \nu(y).
\end{equation} 
Assume that $\nu(y) < \nu(x)$. Then we have $\nu(x) \boxplus \nu(y)=\nu(x)$ and it follows from \eqref{orderpreserving} that $\nu(x)=\nu(y)$ which is a contradiction. This shows that $\nu$ is an order-preserving map, where an order on $\mathbb{T}$ is the usual order of $\mathbb{R}$. Furthermore, we have $\nu(0)=0$ since $\nu(0\odot 0 )=\nu(0)=\nu(0)\boxdot \nu(0)=\nu(0)+\nu(0)$. It follows that $\nu(1) \geq 0(=1_{\mathbb{T}})$. Finally, similar to the first case, we have $\nu(\frac{a}{b})=\frac{a}{b}\nu(1)$. Conversely, it is clear that all maps which satisfy such properties are hyperfield valuations on $S$. Hence, the set of hyperfield valuations on $S$ is equal to $\mathbb{R}_{\geq 0}$. Furthermore, two valuations $\nu_1$, $\nu_2$ on $S$ with $\nu_1(1)$, $\nu_2(1) >0$ are equivalent as in the first case. Hence, there are exactly two hyperfield valuations on $S$ up to equivalence.
\end{enumerate}
\end{proof}

\subsection{The second example, $\mathbb{Q}_{max}(T)$}\label{valuationofQmax}\hspace*{\fill} \\
We begin by investigating $\mathbb{Q}_{max}[T]$, the idempotent semiring of polynomials in a variable $T$ with coefficient in $\mathbb{Q}_{max}$. In the sequel, we use the notations $+$ and $\cdot$ for the usual operations of $\mathbb{Q}$. We use the notations $\oplus,\odot$ for the operations of $\mathbb{Q}_{max}[T]$ and $+_t,\cdot_t$ for $\mathbb{Q}_{max}$. \\
For $f(T)=\sum_{i=0}^na_iT^i$, $g(T)=\sum_{i=0}^m b_iT^i \in \mathbb{Q}_{max}[T]$, suppose that $n\leq m$. The addition and the multiplication of $\mathbb{Q}_{max}[T]$ are given as follows:  
\begin{equation}\label{polyadd}
(f\oplus g)(T)=\sum_{i=0}^n \max\{a_i,b_i\}T^i \oplus \sum_{i=n+1}^mb_iT^i,
\end{equation}
\begin{equation}\label{polymulti}
(f\odot g)(T)=\sum_{i=0}^{n+m}(\sum_{r+l=i}a_r\cdot_t b_l)T^i=\sum_{i=0}^{n+m}(\max_{r+l=i}\{a_r+b_l\})T^i.
\end{equation}
Note that we can consider the semifield $\mathbb{Q}_{max}$ as an `algebraically closed' semifield since any polynomial equation with coefficients in $\mathbb{Q}_{max}$ has a (tropical) solution in $\mathbb{Q}_{max}$. However, different from the classical case, any polynomial in $\mathbb{Q}_{max}[T]$ does not have to be factored into linear polynomials. Consider the following example.

\begin{myeg}
Let $P(T)=T^{\odot 2}\oplus T \oplus 3 \in \mathbb{Q}_{max}[T]$. Then, $T=\frac{3}{2}$ is a tropical solution of $P(T)$. Suppose that $T^{\odot 2}\oplus T \oplus 3=(T \oplus a)\odot (T\oplus b)$. Then, we have $(T\oplus a)\odot (T\oplus b)=T^{\odot 2} \oplus\max\{a,b\}\odot T \oplus(a+b)$. Thus, for $P(T)$ to be factored into linear polynomials, we should have $\max\{a,b\}=1$ and $a+b=3$, however this is impossible. Hence, $P(T)$ can not be factored into linear polynomials.
\end{myeg}

\noindent To remedy this issue, in tropical geometry, one imposes a functional equivalence relation on $\mathbb{Q}_{max}[T]$ (cf. \cite{noah}). Recall that polynomials $f(T),g(T) \in \mathbb{Q}_{max}[T]$ are functionally equivalent, denoted by $f(T) \sim g(T)$, if $f(t)=g(t)$ $\forall t \in \mathbb{Q}_{max}$. We further recall that a congruence relation on a semiring is an equivalence relation $\sim$ which is stable under addition and multiplication, i.e., if $x\sim y$ and $a\sim b$, then $(x+a) \sim (y+b)$ and $xa\sim yb$.

\begin{pro}\label{congruencereonsemiring}
For $S=\mathbb{Q}_{max}[T]$, a functional equivalence relation $\sim$ on $S$ is a congruence relation.
\end{pro}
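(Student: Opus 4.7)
The plan is to reduce the statement to the observation that, for each fixed $t\in\mathbb{Q}_{max}$, the evaluation map
\[
\mathrm{ev}_t\colon \mathbb{Q}_{max}[T]\longrightarrow \mathbb{Q}_{max},\qquad f\longmapsto f(t),
\]
is a homomorphism of semirings. Once this is established, the congruence property is essentially automatic: given $f\sim g$ and $p\sim q$, one has $f(t)=g(t)$ and $p(t)=q(t)$ for every $t$, hence
\[
(f\oplus p)(t)=f(t)+_t p(t)=g(t)+_t q(t)=(g\oplus q)(t),
\]
and similarly $(f\odot p)(t)=(g\odot q)(t)$, so that $f\oplus p\sim g\oplus q$ and $f\odot p\sim g\odot q$.

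The real content is therefore checking that $\mathrm{ev}_t$ respects $\oplus$ and $\odot$. For addition, I would unravel \eqref{polyadd}: with $f=\sum a_iT^i$ and $g=\sum b_iT^i$ (say $n\leq m$),
\[
(f\oplus g)(t)=\max_{0\leq i\leq m}\bigl(c_i+it\bigr),\quad c_i=\max\{a_i,b_i\}\text{ for }i\leq n,\ c_i=b_i\text{ for }i>n,
\]
and splitting the outer max over $i\leq n$ and $i>n$ immediately gives $\max_i(a_i+it)+_t\max_i(b_i+it)=f(t)+_t g(t)$, since $+_t=\max$. For multiplication, I would use \eqref{polymulti} and compute
\[
(f\odot g)(t)=\max_{0\leq i\leq n+m}\Bigl(\max_{r+l=i}(a_r+b_l)+it\Bigr)=\max_{r,l}\bigl((a_r+rt)+(b_l+lt)\bigr)
=f(t)\cdot_t g(t),
\]
using that $\cdot_t$ is ordinary addition and that $\max$ distributes over $+$. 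These two computations are the only substantive steps.

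I do not expect any serious obstacle. The mild subtlety to be careful about is the asymmetric shape of the addition formula \eqref{polyadd} (coefficients beyond the shorter polynomial are simply copied over), but this is handled by splitting the index range as above. Reflexivity, symmetry, and transitivity of $\sim$ are trivial, so once $\mathrm{ev}_t$ is shown to be a semiring homomorphism for each $t\in\mathbb{Q}_{max}$, the proposition follows.
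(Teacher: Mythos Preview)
Your proposal is correct and follows essentially the same approach as the paper: both reduce the congruence property to the two identities $(f\oplus g)(t)=f(t)+_t g(t)$ and $(f\odot g)(t)=f(t)\cdot_t g(t)$ for every $t\in\mathbb{Q}_{max}$, which is precisely the statement that each $\mathrm{ev}_t$ is a semiring homomorphism. Your verification of the multiplicative identity is in fact slightly slicker than the paper's (which proves the two inequalities separately), but the strategy and substance are the same.
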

\begin{proof}
Clearly, $\sim$ is an equivalence relation. Suppose that $f(T) \sim g(T)$ and $h(T) \sim q(T)$. Then, we have to show that $f(T)\oplus h(T) \sim g(T)\oplus q(T)$ and $f(T)\odot h(T) \sim g(T)\odot q(T)$.  Let $f(T)=\sum_{i=0}^n a_iT^i$, $g(T)=\sum_{i=0}^m b_iT^i$. It is enough to show that 
\[
(f\oplus g)(x)=f(x)+_tg(x), \quad (f\odot g)(x)=f(x)\cdot_tg(x)\quad \forall x \in \mathbb{Q}_{max}.
\] 
We may assume that $n \leq m$. Then we have
\[(f\oplus g)(T)=\sum_{i=0}^n \max\{a_i,b_i\}T^i \oplus \sum_{i=n+1}^mb_iT^i.\]
For $x \in \mathbb{Q}_{max}$, we have, by letting $a_i=-\infty$ for $i=n+1,...,m$,
\[(f\oplus g)(x)=\max_{i=0,...,m}\{\max\{a_i,b_i\}+ix\}.\]
However, $f(x)=\max_{i=0,...n}\{a_i+ix\}$ and $g(x)=\max_{i=0,...m}\{b_i+ix\}$, thus 
\[f(x)+_tg(x)=\max\{f(x),g(x)\}=\max\{\max_{i=0,...,n}\{a_i+ix\},\max_{i=0,...,m}\{b_i+ix\}\}\]\[=\max_{i=0,...,m}\{\max\{a_i,b_i\}+ix\}=(f\oplus g)(x).\] 
This proves the first part. Next, we have 
\[(f\odot g)(T)=\sum_{i=0}^{n+m}(\sum_{r+l=i}a_rb_l)T^i=\sum_{i=0}^{n+m}(\max_{r+l=i}\{a_r+b_l\})T^i.\]
It follows that for $x \in \mathbb{Q}_{max}$, we have
\[(f\odot g)(x)=\max_{0 \leq i \leq n+m}\{\max_{r+l=i}\{a_r+b_l\}+ix\}=\max_{0 \leq i \leq n+m}\{\max_{r+l=i}\{a_r+rx+b_l+lx\}\}.\]
On the other hand, we have
\[f(x)\cdot_t g(x)=\max_{0\leq i \leq n}\{a_i+ix\}+\max_{0 \leq j \leq m}\{b_j+jx\}.\]
Thus, if $f(x)=a_{i_0}+i_0x$ and $g(x)=b_{j_0}+j_0x$ for some $i_0$ and $j_0$, then we have 
\[f(x)\cdot_t g(x)=(a_{i_0}+i_0x)+(b_{j_0}+j_0x)=(a_{i_0}+b_{j_0})+(i_0+j_0)x.\] 
It follows that $f(x)\cdot_t g(x) \leq (f\odot g)(x)$. But, if $(f\odot g)(x)=(a_{r_0}+r_0x)+(b_{l_0}+l_0x)$, then $(f\odot g)(x) \leq f(x) \cdot_t g(x)$. Hence, $(f\odot g)(x)=f(x)\cdot_t g(x)$.
\end{proof}
 
From Proposition \ref{congruencereonsemiring}, the set $\overline{\mathbb{Q}_{max}[T]}:=\mathbb{Q}_{max}[T]/ \sim$ of equivalence classes is an idempotent semiring. In fact, $\overline{\mathbb{Q}_{max}[T]}$ is a semiring since $\sim$ is a congruence relation. Furthermore, for $f(T) \in \mathbb{Q}_{max}[T]$, we have $f(x)+_tf(x)=f(x)$ $\forall x \in \mathbb{Q}_{max}$. This implies that $f(T)\oplus f(T) \sim f(T)$ and hence $\overline{\mathbb{Q}_{max}[T]}$ is an idempotent semiring. It is known that, for $\overline{\mathbb{Q}_{max}[T]}$, the fundamental theorem of tropical algebra holds, i.e., a polynomial $\overline{P(T)} \in \overline{\mathbb{Q}_{max}[T]}$ can be uniquely factored into linear polynomials in $\overline{\mathbb{Q}_{max}[T]}$ (cf. \cite{bernd1} or \cite{tropicalpoly}). In particular, this implies that the notion of the degree of $\overline{f(T)} \in \overline{\mathbb{Q}_{max}[T]}$ is well defined. Furthermore, $\overline{\mathbb{Q}_{max}[T]}$ does not have any multiplicative zero-divisor. Indeed, suppose that $\overline{f(T)}\cdot \overline{g(T)}=\overline{(fg)(T)} \sim (-\infty)$. Then, for $x \in \mathbb{Q}_{max}$, we have $f(x)\cdot_t g(x) =f(x)+g(x)=-\infty$. In other words, for $x \in \mathbb{Q}_{max}$, we have $f(x)=-\infty$ or $g(x)=-\infty$. However, this only happens when $f(T)=-\infty$ or $g(T)=-\infty$. Thus, $\overline{\mathbb{Q}_{max}[T]}$ does not have a multiplicative zero-divisor. In fact, in Corollary \ref{multiplicatcancellative}, we shall prove that $\overline{\mathbb{Q}_{max}[T]}$ satisfies the stronger condition: $\overline{\mathbb{Q}_{max}[T]}$ is multiplicatively cancellative.

\begin{mydef} \label{fntfid}
We define $\mathbb{Q}_{max}(T)$ to be the localization of $\overline{\mathbb{Q}_{max}[T]}$ at $\overline{\mathbb{Q}_{max}[T]}\backslash \{-\infty\}$. In other words, $\mathbb{Q}_{max}(T):=\Frac(\overline{\mathbb{Q}_{max}[T]})$.
\end{mydef}

We first prove several lemmas to classify valuations on $\mathbb{Q}_{max}(T)$.

\begin{lem}\label{v(T)<0}
Let $S:=\overline{\mathbb{Q}_{max}[T]}$. For $\overline{f(T)} \in S$, let $r_f$ be the maximum natural number such that $\overline{T}^{r_f}$ can divide $\overline{f(T)}$. Then, for $\overline{f(T)},\overline{g(T)} \in S$, we have 
\[r_{f\oplus g}= \min\{ r_f, r_g\}, \quad r_{f \odot g}=r_f+r_g.\]
\end{lem}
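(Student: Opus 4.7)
The plan is to reduce both identities to the unique factorization on $M = \overline{\mathbb{Q}_{max}[T]}$ (the fundamental theorem of tropical algebra cited above), together with a single observation: divisibility by $\overline{T}$ is detected by evaluation at $-\infty$. Unique factorization expresses any nonzero $\overline{f} \in M$ as a product of linear factors $(T \oplus a_j)$ with $a_j \in \mathbb{Q}_{max}$; isolating the factors with $a_j = -\infty$ (which coincide with $\overline{T}$) makes $r_f$ well defined as the multiplicity of $\overline{T}$ in this factorization. The product formula $r_{f \odot g} = r_f + r_g$ is then immediate from uniqueness, since the multiplicity of a fixed irreducible factor is additive on products.

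For the sum, I would first establish the key equivalence
\[
\overline{T} \mid \overline{h} \quad \Longleftrightarrow \quad h(-\infty) = -\infty.
\]
The direction $(\Rightarrow)$ is trivial: if $\overline{h} = \overline{T} \odot \overline{h_1}$ then $h(-\infty) = (-\infty) \odot h_1(-\infty) = -\infty$. For $(\Leftarrow)$, evaluating a representative $h(T) = \sum_{i=0}^n a_i T^i$ at $-\infty$ yields $h(-\infty) = a_0$, since $a_i + i \cdot (-\infty) = -\infty$ for every $i \geq 1$; hence $h(-\infty) = -\infty$ forces $a_0 = -\infty$, which gives the literal factorization $h(T) = T \odot h_1(T)$ in $\mathbb{Q}_{max}[T]$ and therefore in $M$.

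With this in hand, the sum formula is a short calculation. Assume $r_f \leq r_g$, and write $\overline{f} = \overline{T}^{\,r_f} \odot \overline{h}$, $\overline{g} = \overline{T}^{\,r_g} \odot \overline{k}$ with $r_h = r_k = 0$. Then
\[
\overline{f \oplus g} \;=\; \overline{T}^{\,r_f} \odot \overline{q}, \qquad \overline{q} \;:=\; \overline{h} \oplus \overline{T^{\,r_g - r_f} \odot k},
\]
and it suffices to show $r_q = 0$, i.e.\ $q(-\infty) \neq -\infty$. If $r_g > r_f$, then $(T^{r_g - r_f} \odot k)(-\infty) = -\infty$ while $h(-\infty) \neq -\infty$, so $q(-\infty) = h(-\infty) \neq -\infty$. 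If $r_g = r_f$, then $q(-\infty) = \max\{h(-\infty),\, k(-\infty)\} \neq -\infty$. Either way $r_{f \oplus g} = r_f = \min\{r_f, r_g\}$.

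The only delicate point is checking that the characterization via $h(-\infty)$ is well posed on functional equivalence classes, but this is exactly the identity $h(-\infty) = a_0$ above, which is manifestly an invariant of the class; everything else is bookkeeping around the unique factorization. I do not anticipate any deeper obstacle.
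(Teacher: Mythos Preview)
Your proposal is correct and follows essentially the same route as the paper: both arguments identify divisibility by $\overline{T}$ with the vanishing of the constant term (equivalently, with $h(-\infty)=-\infty$), factor out the maximal power of $\overline{T}$, and use the fundamental theorem of tropical algebra for well-definedness of $r_f$ and for the product formula. The only cosmetic difference is that you split the sum argument into the cases $r_g>r_f$ and $r_g=r_f$, whereas the paper handles both at once by noting that $\overline{h}\oplus \overline{T}^{\,r_g-r_f}\overline{k}$ inherits a constant term $\neq -\infty$ from $\overline{h}$.
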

\begin{proof}
Let $f(T),g(T) \in \mathbb{Q}_{max}[T]$. We first claim that if $f(T)$ has a constant term different from $-\infty$ and $g(T)$ does not have a constant term, then $f(T)$ and $g(T)$ are not functionally equivalent. Indeed, if $f(T)=\sum a_iT^i$ and $g(T)=\sum b_iT^i$, then $f(-\infty)=a_0 \neq -\infty=g(-\infty)$. One can further observe that if $f(T) \sim T$, then $f(T)=T$. In fact, from the fundamental theorem of tropical algebra, we know that the degree of $f(T)$ should be one. Hence, $f(T)=a\odot T \oplus b$ for some $a,b \in \mathbb{Q}_{max}$. Then $b=-\infty$ since, otherwise, $f(-\infty)=b \neq -\infty$ and therefore $f(T) \not \sim T$. Furthermore, $a=0$ since, otherwise, we have $f(-a)=0$. However, this is different from the evaluation of $T$ at $-a$.\\ 
Next, we claim that $\overline{f(T)} \in S$ has the factor $\overline{T}$ if and only if any representative of $\overline{f(T)}$ does not have a constant term. To see this, suppose that $\overline{f(T)}$ has the factor $\overline{T}$. Then, $f(T) \sim T\odot h(T)$ for some $h(T) \in \mathbb{Q}_{max}[T]$. Since $T\odot h(T)$ does not have a constant term, from the first claim, $f(T)$ also does not have a constant term. Conversely, suppose that any representative of $\overline{f(T)}$ does not have a constant term. We can write $f(T)=T \odot h(T)$ for some $h(T) \in \mathbb{Q}_{max}$. Hence, $\overline{f(T)}$ has a factor $\overline{T}$. From the above argument and the fundamental theorem of tropical algebra, it is now clear that $r_f$ is well defined. Moreover, for $\overline{f(T)}, \overline{g(T)} \in S$, we can write $\overline{f(T)}=\overline{T}^l \odot \overline{h(T)}$, $\overline{g(T)}=\overline{T}^m \odot \overline{p(T)}$ for some $\overline{h(T)}$, $\overline{p(T)}$ such that $\overline{h(T)}$ and $\overline{p(T)}$ do not have $\overline{T}$ as a factor. From our previous claim, this is equivalent to that $\overline{h(T)}$ and $\overline{p(T)}$ do have a constant term different from $-\infty$. Assume that $l \leq m$, then we have
\[\overline{f(T)}\oplus \overline{g(T)}=\overline{T}^l\odot (\overline{h(T)} \oplus \overline{T}^{(m-l)}\overline{p(T)}).\]
Since $\overline{h(T)}$ has a constant term different from $-\infty$, it follows that $\overline{h(T)} \oplus \overline{T}^{(m-l)}\overline{p(T)}$ has a constant term different from $-\infty$ and therefore $\overline{h(T)} \oplus \overline{T}^{(m-l)}\overline{p(T)}$ does not have a factor $\overline{T}$. This shows that $r_{f\oplus g}=\min\{r_f,r_g\}$. The second assertion $r_{f\odot g}=r_f+r_g$ is clear from the fundamental theorem of tropical algebra.
\end{proof} 

\begin{rmk}
Lemma \ref{v(T)<0} is different from the classical case. Essentially, this is due to the absence of additive inverses. In the classical case, if $f(T)=T^lh(T)$, $g(T)=T^mp(T) \in \mathbb{Q}[T]$ with $l <m$, then $f(T)+g(T)=T^l(h(T)+T^{(m-l)}p(T))$. Hence, we have $r_{f+g}=\min\{r_f,r_g\}$. The problem is when $l=m$. For example, if $f(T)=T(T+1)$, $g(T)=T(T-1) \in \mathbb{Q}[T]$, then $r_f=r_g=1$. However, $f(T)+g(T)=2T^2$ and hence $r_{f+g}=2 >\min\{r_f,r_g\}=1$ from the additive cancellation which is impossible in the case of idempotent semirings.
\end{rmk}

When one replace $\overline{T}$ with any linear polynomial, one has the following. 

\begin{lem}\label{g_blemma}
Let $S:=\overline{\mathbb{Q}_{max}[T]}$ and $g_b:=\overline{(0 \oplus b\odot T)}$, where $b \in \mathbb{Q}$. For $\overline{f(T)} \in S$, let $r_b(f)$ be the maximum natural number such that $\overline{g_b}^{r_b(f)}$ can divide $\overline{f(T)}$. Then, for $\overline{f(T)},\overline{g(T)} \in S$, we have 
\[\min\{ r_b(f), r_b (g)\} \leq r_b({f\oplus g}), \quad r_b({f \odot g})=r_b(f)+r_b(g).\]
\end{lem}
\begin{proof}
For the notational convenience, we let $r_b(f)=r_f$. From the fundamental theorem of tropical algebra, $r_f$ is well defined and clearly we have $r_{f\odot g}=r_f+r_g$. Let $\overline{f}(T)=g_b^{r_f}\overline{f'(T)}$ and $\overline{g}(T)=g_b^{r_g}\overline{g'(T)}$ for some $\overline{f'(T)}$ and $\overline{g'(T)} \in S$. Suppose that $r_f \leq r_g$. This implies that 
\[
\overline{f(T)\oplus g(T)}=g_b^{r_f}(\overline{f'(T)}\oplus g_b^{(r_g-r_f)}\overline{g'(T)})
\]
and hence $\min\{r_f,r_g\} \leq r_{f\oplus g}$. 
\end{proof}

\begin{rmk}
Lemma \ref{g_blemma} cannot be as sharp as Lemma \ref{v(T)<0}. For instance, let $g_b=(0\oplus 0\odot T)$, $f(T)=(0\oplus 0\odot T)^2\odot T$, and $g(T)=(0\oplus 0\odot T)^2$. Then $r_f=r_g=2$. But, we have
\[
 f(T)\oplus g(T)=(0\oplus 0\odot T)^3
\]
and hence $r_{f\oplus g}=3$. 
\end{rmk}

\begin{lem}\label{degree}
Let $S:=\overline{\mathbb{Q}_{max}[T]}$. Then, for $\overline{f(T)} \in S$, $\deg \overline{f(T)}$ is well defined. Furthermore, for $\overline{f(T)}$, $\overline{g(T)} \in S$, we have 
\[\deg (\overline{f(T)} \oplus \overline{g(T)}) = \max\{\deg \overline{f(T)},\deg \overline{g(T)}\},\] \[  \deg (\overline{f(T)} \odot \overline{g(T)}) = \deg \overline{f(T)}+\deg \overline{g(T)}.\]
\end{lem}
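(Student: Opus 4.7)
The plan is to reduce everything to direct computations with representative polynomials in $\mathbb{Q}_{max}[T]$ using the explicit formulas \eqref{polyadd} and \eqref{polymulti}, after first showing that the classical notion of degree descends to the quotient $M = \overline{\mathbb{Q}_{max}[T]}$.

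For well-definedness of $\deg \overline{f(T)}$, I would pick a representative $f(T) = \sum_{i=0}^n a_i T^i$ with $a_n \neq -\infty$ so that $\deg f = n$ in the usual sense. Evaluating gives $f(x) = \max_{0 \leq i \leq n}\{a_i + ix\}$, and as $x \to +\infty$ the term $a_n + nx$ dominates, so $\lim_{x \to +\infty} f(x)/x = n$. Since functional equivalence forces $f(x) = g(x)$ for all $x \in \mathbb{Q}_{max}$, this asymptotic slope is an invariant of the class, and $\deg \overline{f(T)}$ is therefore unambiguous. (An alternative route is to invoke the fundamental theorem of tropical algebra mentioned just before Lemma \ref{v(T)<0}: the unique linear factorization in $M$ lets one identify $\deg \overline{f(T)}$ with the number of linear factors counted with multiplicity.)

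For the additive identity, I would choose representatives $f, g$ with $\deg f = n$, $\deg g = m$ and, without loss of generality, $n \leq m$. Inspecting \eqref{polyadd}, the coefficient of $T^m$ in $f \oplus g$ equals $b_m$ when $n < m$ and equals $\max\{a_n, b_n\}$ when $n = m$; in either case it lies in $\mathbb{Q}$, while all higher coefficients vanish. Hence $\deg(f \oplus g) = m = \max\{n, m\}$. For the multiplicative identity, formula \eqref{polymulti} gives the coefficient of $T^{n+m}$ as $\max_{r + l = n+m}\{a_r + b_l\}$; since $a_r = -\infty$ for $r > n$ and $b_l = -\infty$ for $l > m$, the only finite contribution is $a_n + b_m \in \mathbb{Q}$. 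No higher-degree terms appear, so $\deg(f \odot g) = n + m$. Both left-hand sides are well-defined by the first paragraph, so these representative-level equalities descend to $M$.

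There is no real obstacle here: the entire argument is an inspection of leading terms in \eqref{polyadd} and \eqref{polymulti}. The only subtlety worth isolating is well-definedness of the degree on the quotient, which the asymptotic-slope observation handles in a line without having to invoke the full factorization theorem.
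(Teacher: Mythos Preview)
Your argument is correct. The paper's proof is a single line that defers well-definedness of the degree to the fundamental theorem of tropical algebra (the unique linear factorization in $M$, already invoked just before Lemma~\ref{v(T)<0}) and the two identities to the observation that ``no additive cancellation happens when we add two tropical polynomials.'' Your treatment of the identities is exactly a fleshed-out version of that no-cancellation remark: you inspect the top coefficient in \eqref{polyadd} and \eqref{polymulti} and note it stays finite. The one genuine difference is your asymptotic-slope argument for well-definedness, which reads off $\deg f$ as $\lim_{x\to+\infty} f(x)/x$; this is more elementary than invoking the full factorization theorem and has the virtue of being self-contained, whereas the paper's route ties the lemma to an external result. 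Either way the content is the same, and you correctly flag the factorization theorem as an alternative.
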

\begin{proof}
This is straightforward from the fundamental theorem of tropical algebra and the fact that no additive cancellation happens when we add two tropical polynomials.
\end{proof}

\begin{cor}\label{multiplicatcancellative}
Let $S:=\overline{\mathbb{Q}_{max}[T]}$. Then $S$ is multiplicatively cancellative. 
\end{cor}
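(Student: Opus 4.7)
The strategy is to cash in on the fundamental theorem of tropical algebra invoked in the paragraph preceding the statement: every nonzero element of $M = \overline{\mathbb{Q}_{max}[T]}$ admits a unique factorization as a leading tropical constant times a product of linear factors $\overline{T} \oplus \alpha$ with $\alpha \in \mathbb{Q}_{max}$, where the degenerate value $\alpha = -\infty$ is allowed and simply yields the factor $\overline{T}$. Once this is in hand, multiplicative cancellativity reduces to cancellativity of multisets and of the constants in $\mathbb{Q}_{max}$.

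Concretely, given a relation $\overline{f} \odot \overline{h} = \overline{g} \odot \overline{h}$ in $M$ with $\overline{h} \neq \overline{-\infty}$, I would first write each of $\overline{f}, \overline{g}, \overline{h}$ in its unique factored form
\[
\overline{f} = c_f \odot \prod_{i} (\overline{T} \oplus \alpha_i), \quad \overline{g} = c_g \odot \prod_{j} (\overline{T} \oplus \beta_j), \quad \overline{h} = c_h \odot \prod_{k} (\overline{T} \oplus \gamma_k),
\]
with $c_f, c_g, c_h \in \mathbb{Q}_{max}$ and multisets of roots $\{\alpha_i\}, \{\beta_j\}, \{\gamma_k\}$. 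The assumption $\overline{h} \neq \overline{-\infty}$ forces $c_h \neq -\infty$, so $c_h \in \mathbb{Q}$ and is therefore invertible under $\odot$ (which is just ordinary addition on $\mathbb{Q}$). Substituting these forms into the hypothesis produces two expressions for the same element of $M$, both already written as a leading constant times a product of linear factors, and the uniqueness part of the fundamental theorem yields
\[
c_f \odot c_h = c_g \odot c_h \quad \text{in } \mathbb{Q}_{max}, \qquad \{\alpha_i\} \sqcup \{\gamma_k\} = \{\beta_j\} \sqcup \{\gamma_k\} \text{ as multisets.}
\]
Cancelling $c_h$ gives $c_f = c_g$, cancelling the common multiset $\{\gamma_k\}$ gives $\{\alpha_i\} = \{\beta_j\}$, and reassembling the factorizations yields $\overline{f} = \overline{g}$, as required.

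The main obstacle is really just bookkeeping: one must make sure that the factorization of $\overline{f} \odot \overline{h}$ is the concatenation of the factorizations of $\overline{f}$ and $\overline{h}$, keeping track of the degenerate factor $\overline{T}$ on equal footing with the factors $\overline{T} \oplus \alpha$ for $\alpha \in \mathbb{Q}$. This is precisely controlled by Lemma \ref{v(T)<0} (multiplicativity of the $\overline{T}$-multiplicity) and Lemma \ref{degree} (multiplicativity of the degree), which together pin down how many linear factors appear on each side. No further tropical or analytic input is needed beyond what has already been set up.
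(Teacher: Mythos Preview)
Your argument is correct and takes a genuinely different route from the paper. The paper argues pointwise from the very definition of functional equivalence: the hypothesis $\overline{f\odot h}=\overline{g\odot h}$ unwinds to $f(x)+h(x)=g(x)+h(x)$ for every $x\in\mathbb{Q}_{max}$ (ordinary addition of reals), and since $h(x)\neq-\infty$ whenever $x\neq-\infty$, one cancels $h(x)$ in $\mathbb{R}$ to obtain $f(x)=g(x)$ for all finite $x$. The lone remaining case $x=-\infty$ is then handled by first deducing $r_f=r_g$ from Lemma~\ref{v(T)<0} and then matching the constant terms. You instead lean entirely on the uniqueness half of the fundamental theorem of tropical algebra: write each side in normalized factored form, observe that multiplication concatenates factorizations, and cancel the common leading constant and the common multiset of roots. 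Your route is cleaner and more structural once unique factorization is in hand; the paper's route stays closer to the ground, using only that evaluation is well-defined and that $\mathbb{R}$ is additively cancellative, so it would go through in any setting with a functional description even before a factorization theorem is available. One minor point: your closing remark that Lemmas~\ref{v(T)<0} and~\ref{degree} are needed for the bookkeeping overstates things---once $\overline{f}\odot\overline{h}$ is written as $(c_f\odot c_h)\odot\prod(\overline{T}\oplus\alpha)$, uniqueness alone forces the match of constants and root multisets, and those lemmas play no essential role in your argument.
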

\begin{proof}
For $\overline{f(T)\odot h(T)}=\overline{g(T)\odot h(T)}$ with $h(T)\neq -\infty$, we have to show that $\overline{f(T)}=\overline{g(T)}$. We continue using the notation as in Lemma \ref{v(T)<0}. We know that $\overline{f(T)\odot h(T)}=\overline{g(T)\odot h(T)}$ is equivalent to the following condition: 
\begin{equation}\label{multiplicativefree}
f(x)+h(x)=g(x)+h(x) \quad \forall x \in \mathbb{Q}_{max},
\end{equation}
where $+$ is the usual addition. Thus, if $h(x) \neq -\infty$, we have $f(x)=g(x)$. Since $h(x)=-\infty$ happens only when $x=-\infty$, it follows that $f(x)=g(x)$ as long as $x\neq -\infty$. Hence, all we have to show is that $f(-\infty)=g(-\infty)$. From Lemma \ref{v(T)<0}, we have $r_f+r_h=r_g+r_h$ and therefore $r_f=r_g$. Fix a representative $f(T)=\sum a_iT^i$ of $\overline{f(T)}$. We then have $f(-\infty)=a_0$ if $r_f=0$ and $f(-\infty)=-\infty$ if $r_f \neq 0$. Thus, we may assume that $r_f=r_g=0$. Fix a representative $g(T)=\sum b_iT^i$ of $\overline{g(T)}$. From Lemma $3.2$ of \cite{tropicalpoly}, there exists a real number $N$ such that if $x>N$, then $f(x)=a_0$ and $g(x)=b_0$. Since we know that $f(T)$ and $g(T)$ agree on all elements of $\mathbb{Q}_{max}$ but $-\infty$, we conclude that $f(x)=a_0=b_0=g(x)$ for $x>N$. Therefore, we have $f(-\infty)=a_0=b_0=g(-\infty)$ and hence $\overline{f(T)}=\overline{g(T)}$.
\end{proof}

Let $M:=\overline{\mathbb{Q}_{max}[T]}$, $S:=M \backslash \{\overline{-\infty}\}$, and $\mathbb{Q}_{max}(T)=S^{-1}M$ as in Definition \ref{fntfid}. It follows from Corollary \ref{multiplicatcancellative} that the localization map $S^{-1}:M \longrightarrow S^{-1}M$ is injective and $\mathbb{Q}_{max}(T)$ is an idempotent semifield.

\begin{pro}\label{valuefraction}
Let $M$ be a multiplicatively cancellative idempotent semiring. Let $S:=M\backslash\{0_M\}$ and $N:=S^{-1}M$, the localization of $M$ at $S$. Let $\nu$ be any of three valuations (classical, strict, hyperfield) on $N$. Then, a valuation $\nu$ on $N$ only depends on the image $i(M)$ of the canonical injection $i:M\longrightarrow S^{-1}M=N$, $m \mapsto \frac{m}{1}$.
\end{pro}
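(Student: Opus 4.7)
The plan is to show that every element of $N=S^{-1}M$ can be written as a ``ratio'' of two elements from $i(M)$, and then to exploit the multiplicative axiom of $\nu$ (which is common to all three definitions) in order to read off $\nu$ on such a ratio from its values on $i(M)$.

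First, I observe that since $M$ is multiplicatively cancellative, the canonical map $i\colon M \longrightarrow S^{-1}M=N$ is injective, and for every $s\in S$ the element $i(s)=\frac{s}{1}$ is invertible in $N$ with inverse $\frac{1_M}{s}$. Hence every element $\frac{m}{s}\in N$ factors as $i(m)\odot i(s)^{-1}$ (where $\odot$ denotes multiplication in $N$). Next, in each of Definitions \ref{classcalvaluation}, \ref{semiringmorphism}, and \ref{semihypval} the valuation $\nu$ satisfies $\nu(xy)=\nu(x)+\nu(y)$ with the usual real addition, since in both $\mathbb{R}_{max}$ and $\mathbb{R}_{+,val}$ the multiplication is by definition ordinary addition of reals, extended absorbingly at the zero element.

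Applying this axiom to $1_N=1_N\odot 1_N$ gives $\nu(1_N)=2\nu(1_N)$; and since $1_N\neq 0_N$, the first axiom in each definition forces $\nu(1_N)$ to be a genuine real number, so $\nu(1_N)=0$. Now for $s\in S$, the injectivity of $i$ yields $i(s)\neq 0_N$, hence $\nu(i(s))\in\mathbb{R}$. From
\[
\nu(i(s))+\nu(i(s)^{-1})=\nu\bigl(i(s)\odot i(s)^{-1}\bigr)=\nu(1_N)=0,
\]
I conclude $\nu(i(s)^{-1})=-\nu(i(s))$. Therefore
\[
\nu\!\left(\tfrac{m}{s}\right)=\nu\bigl(i(m)\odot i(s)^{-1}\bigr)=\nu(i(m))-\nu(i(s)),
\]
and this depends only on the restriction of $\nu$ to $i(M)$.

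I expect no serious obstacle: the entire argument is a formal manipulation using only the multiplicative axiom, which is literally the same in all three definitions. The only care required is in ensuring that $\nu(i(s))$ is a finite real number so that the subtraction on the right-hand side is meaningful, and this is guaranteed by the non-vanishing axiom together with the injectivity of the localization map provided by multiplicative cancellativity of $M$.
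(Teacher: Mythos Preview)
Your proof is correct and follows essentially the same approach as the paper: use the multiplicative axiom to get $\nu(1_N)=0$, deduce $\nu(i(s)^{-1})=-\nu(i(s))$, and conclude $\nu(m/s)=\nu(i(m))-\nu(i(s))$. Your version is in fact slightly more careful than the paper's, since you explicitly justify that $\nu(i(s))$ is finite (via injectivity of $i$ and the non-vanishing axiom) so that the subtraction is meaningful.
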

\begin{proof}
Since $i$ is an injection, one can identify an element $m \in M$ with $\frac{m}{1} \in S^{-1}M=N$ under the canonical injection $i$. We have $1_N=\frac{a}{a}$ $\forall a \in S=M^{\times}$. Then, with Definitions \ref{classcalvaluation}, \ref{semiringmorphism}, and \ref{semihypval}, we have $\nu(1_N)=\nu(a)+\nu(\frac{1}{a})=0$, where $+$ is the usual addition of real numbers. It follows that $\nu(\frac{1}{a})=-\nu(a)$ and hence $\nu(\frac{a}{b})=\nu(a)-\nu(b)$. 
\end{proof}

\begin{rmk}
In the theory of commutative rings, to be multiplicatively cancellative and to have no (multiplicative) zero divisors are equivalent conditions. This is in contrast to the theory of semirings, where the first condition implies the second condition but not conversely in general. However, even when $M$ is only a semiring without (multiplicative) zero divisors, one can derive the statement as in Proposition \ref{valuefraction} in the following sense: Let $M$ be a semiring without (multiplicative) zero divisors and $Val(M)$ be the set of strict (resp. hyperfield) valuations on $M$. Then, there exists a set bijection between $Val(M)$ and the set of strict (resp. hyperfield) valuations on $Val(S^{-1}M)$. Indeed, for $\nu \in Val(M)$, one can define a valuation $\tilde{\nu} \in Val(S^{-1}M)$ such that $\tilde{\nu}(\frac{a}{b})=\nu(a)\nu(b)^{-1}$. Conversely, for $\nu \in Val(S^{-1}M)$, we define $\hat{\nu}=\nu\circ i \in Val(M)$, where $i:M\longrightarrow S^{-1}M$. One can easily check that these two are well defined and inverses to each other. 
\end{rmk}

\begin{pro}\label{semiringhomvalfunctionfield}
Let $M:=\overline{\mathbb{Q}_{max}[T]}$, $S:=M\backslash \{\overline{-\infty}\}$, and $\mathbb{Q}_{max}(T):=S^{-1}M$. Then, the set of strict valuations on $\mathbb{Q}_{max}(T)$ which are trivial on $\mathbb{Q}_{max}$ is equal to $\mathbb{R}$. There are exactly three strict valuations on $\mathbb{Q}_{max}(T)$ which are trivial on $\mathbb{Q}_{max}$ up to equivalence.
\end{pro}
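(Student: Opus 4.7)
The plan is to show that any such strict valuation $\nu$ is completely determined by the single real number $t:=\nu(\overline{T})$, and to verify that the three equivalence classes correspond to the sign of $t$.

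First, by Proposition \ref{valuefraction}, it suffices to classify strict valuations on $M:=\overline{\mathbb{Q}_{max}[T]}$ that are trivial on $\mathbb{Q}_{max}$. Using the fundamental theorem of tropical algebra together with Lemma \ref{v(T)<0}, I would write each nonzero $\overline{f(T)}\in M$ uniquely as
\[
\overline{f(T)} \;=\; \overline{c}\,\odot\, \overline{T}^{\,r_f}\,\odot\, \prod_i \overline{T\oplus a_i},
\]
with $c\in \mathbb{Q}_{max}\setminus\{-\infty\}$, $r_f$ the multiplicity of $\overline{T}$ from Lemma \ref{v(T)<0}, and $a_i\in\mathbb{Q}$ the finite tropical roots. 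Multiplicativity of $\nu$ and triviality on $\mathbb{Q}_{max}$ then give $\nu(\overline{f(T)})=r_f\cdot \nu(\overline{T})+\sum_i\nu(\overline{T\oplus a_i})$, and applying Condition $(3)$ of strict valuation to each linear factor yields $\nu(\overline{T\oplus a_i})=\max\{\nu(\overline{T}),\nu(\overline{a_i})\}=\max\{t,0\}$. The closed form
\[
\nu(\overline{f(T)}) \;=\; r_f\cdot t \;+\; (\deg\overline{f}-r_f)\cdot \max\{t,0\}
\]
then shows $\nu$ is determined by the single finite number $t\in \mathbb{R}$ (finiteness coming from Condition $(1)$).

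Next, I would verify the converse: every $t\in \mathbb{R}$ gives a strict valuation via the above formula. Well-definedness on the quotient is automatic since $\deg$ and $r_f$ are invariants of the class (Lemmas \ref{v(T)<0}, \ref{degree}), and multiplicativity follows from the additivity of $\deg$ and $r_f$ under $\odot$. The strict additivity $\nu(\overline{f}\oplus\overline{g})=\max\{\nu(\overline{f}),\nu(\overline{g})\}$ would be checked by case analysis on the sign of $t$: when $t>0$ the formula collapses to $\nu(\overline{f})=t\cdot \deg \overline{f}$ and the claim reduces to $\deg(\overline{f}\oplus\overline{g})=\max\{\deg\overline{f},\deg\overline{g}\}$ from Lemma \ref{degree}; when $t<0$ it collapses to $\nu(\overline{f})=t\cdot r_f$, and the sign reversal caused by multiplying by the negative number $t$ converts $r_{f\oplus g}=\min\{r_f,r_g\}$ (Lemma \ref{v(T)<0}) into the desired maximum; and $t=0$ gives the trivial valuation. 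Proposition \ref{valuefraction} then extends the valuation uniquely to $\mathbb{Q}_{max}(T)=S^{-1}M$, establishing the bijection with $\mathbb{R}$.

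Finally, the equivalence count is immediate: two valuations with parameters $t_1,t_2$ of the same strict sign are equivalent via $\rho=t_1/t_2>0$, and $t=0$ forms its own class, yielding exactly the three equivalence classes $t>0$, $t<0$, and $t=0$. I expect the main obstacle to be the strict-additivity check in the $t<0$ case, where $\nu$ measures only the order of vanishing at $\overline{T}$ rather than the degree, and the sign reversal from negative $t$ must interact correctly with Lemma \ref{v(T)<0} (swapping $\min$ for $\max$); this is precisely the behaviour highlighted in the remark after that lemma, where the absence of additive inverses prevents the classical polynomial identities from holding.
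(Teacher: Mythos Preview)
Your proposal is correct and follows essentially the same approach as the paper: reduce to $M$ via Proposition \ref{valuefraction}, factor into linear pieces using the fundamental theorem of tropical algebra, evaluate $\nu$ on each linear factor via strict additivity, and verify the converse case-by-case using Lemmas \ref{v(T)<0} and \ref{degree}. The only difference is presentational: you package all three cases into a single closed form $\nu(\overline{f})=r_f\,t+(\deg\overline{f}-r_f)\max\{t,0\}$ before splitting on the sign of $t$, whereas the paper treats the three sign cases separately from the outset.
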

\begin{proof}
From Proposition \ref{valuefraction} and Corollary \ref{multiplicatcancellative}, a strict valuation $\nu$ on $\mathbb{Q}_{max}(T)$ only depends on values of $\nu$ on $M$. Let $\overline{f(T)} \in M\backslash \{-\infty\}$. Then, from the fundamental theorem of tropical algebra, we have \[\overline{f(T)}=\overline{l_0(T)}\odot\overline{l_1(T)}\odot \overline{l_2(T)}\odot...\odot \overline{l_n(T)},\] 
where $l_0(T) \in \mathbb{Q}$ and $l_i(T)=a_iT\oplus b_i$ for some $a_i\in \mathbb{Q},b_i \in \mathbb{Q}_{max}$. Since we are considering valuations which are trivial on $\mathbb{Q}_{max}$, we may assume that $l_0(T)=1_{\mathbb{R}_{max}}=0$. It follows that \[\nu(\overline{f(T)})=\nu(\overline{l_1(T)}) +
\nu(\overline{l_2(T)})+...+ \nu(\overline{l_n(T)}).\]
Let us first assume that $\nu(\overline{T}) <0$. If $b_i \neq -\infty$, since $\nu$ is trivial on $\mathbb{Q}_{max}$, we have 
\[\nu(a_i\overline{T}\oplus b_i)=\max\{(\nu(a_i)+\nu(\overline{T})),\nu(b_i)\}
=\max\{\nu(\overline{T}),0\}=0.
\]
Thus, we have 
\begin{equation}\label{strctva}
\nu(\overline{f(T)})=r_f(\nu(\overline{T})),
\end{equation} 
where $r_f$ is as in Lemma \ref{v(T)<0}. Conversely, any map $\nu:\mathbb{Q}_{max}(T) \longrightarrow \mathbb{R}_{max}$ satisfying the following conditions
\[\nu(q)=0 \quad \forall q \in \mathbb{Q},\quad  \nu(-\infty)=-\infty,\quad  \nu(\overline{T}) <0,\quad \nu(\overline{f(T)})=r_f(\nu(\overline{T}))\] is indeed a strict valuation. In fact, from Lemma \ref{v(T)<0}, we know that $r_{f \oplus g}=\min\{r_f,r_g\}$. Since $\nu(\overline{T}) <0$ and $r_f,r_g \in \mathbb{N}$, this implies that 
\[\nu(\overline{f(T)} \oplus \overline{g(T)})=\nu(\overline{f(T)\oplus g(T)})=r_{f \oplus g}\nu(\overline{T})=\min\{r_f,r_g\}\nu(\overline{T})\]\[=\max\{r_f\nu(\overline{T}),r_g\nu(\overline{T})\}
=\max\{\nu(\overline{f(T)}),\nu(\overline{g(T)})\}.\]
Moreover, $\nu(\overline{f(T)}\odot \overline{g(T)})=\nu(\overline{f(T)\odot g(T)})=r_{f\odot g}\nu(\overline{T})=(r_f+r_g)\nu(\overline{T})=r_f\nu(\overline{T})+r_g\nu(\overline{T})=\nu(\overline{f(T)})+\nu(\overline{g(T)})$.
Furthermore, all such strict valuations on $\mathbb{Q}_{max}(T)$ are equivalent. Indeed, let $\nu_1, \nu_2$ be strict valuations on $\mathbb{Q}_{max}(T)$ such that $\nu_1(\overline{T})=\alpha<0$ and $\nu_2(\overline{T})=\beta<0$.
Since $\alpha, \beta$ are negative numbers, $\rho:=\frac{\beta}{\alpha}$ is a positive number and  $\nu_2(\overline{f(T)})=r_f\beta=(r_f\rho)\alpha=\rho\nu_1(\overline{f(T)})$.\\
Secondly, suppose that $\nu(\overline{T})=0$. Then, we have
\[\nu(a_i\overline{T}\oplus b_i)=0.\]
In other words,  $\nu$ is a trivial valuation since $0=1_{\mathbb{R}_{max}}$. Clearly, this is not equivalent to the first case.\\
The final case is when $\nu(\overline{T}) >0$. Then we have 
\[\nu(a_i\overline{T}\oplus b_i)=\max\{(\nu(a_i)+\nu(\overline{T})),\nu(b_i)\}
=\max\{\nu(\overline{T}),0\}=\nu(\overline{T}).
\]
It follows that
\[\nu(\overline{f(T)})=\deg(\overline{f(T)})\nu(\overline{T}).\] 
Conversely, any map $\nu:\mathbb{Q}_{max}(T) \longrightarrow \mathbb{R}_{max}$ satisfying the following conditions
\[\nu(q)=0 \quad \forall q \in \mathbb{Q},\quad  \nu(-\infty)=-\infty,\quad  \nu(\overline{T})>0,\quad \nu(\overline{f(T)})=\deg(\overline{f(T)})(\nu(\overline{T}))\] is indeed a strict valuation from Lemma \ref{degree}.
Furthermore, suppose that $\nu_1$, $\nu_2$ are strict valuations on $\mathbb{Q}_{max}(T)$ such that $\nu_1(\overline{f(T)})=\alpha>0, \nu_2(\overline{f(T)})=\beta>0$. Then, with $\rho=\frac{\beta}{\alpha}$, $\nu_1$ and $\nu_2$ are equivalent. Furthermore, this case is not equivalent to any of the above. In summary, the set of strict valuations on $\mathbb{Q}_{max}(T)$ which are trivial on $\mathbb{Q}_{max}$ is equal to $\mathbb{R}$ (by sending $\nu$ to $\nu(\overline{T})$). There are exactly three strict valuations depending on a sign of a value of $\overline{T}$.
\end{proof}

Next, we compute the set of hyperfield valuations on $\mathbb{Q}_{max}(T)$ which are trivial on $\mathbb{Q}_{max}$. In what follows, to avoid notational confusion, we denote by $\oplus, \odot$ the addition and the multiplication of idempotent semirings, by $+,\cdot$ the usual addition and multiplication of $\mathbb{Q}$, and by $\boxplus, \boxdot$ the addition and the multiplication of $\mathbb{T}$.

\begin{lem}\label{primelemma}
Let $M:=\overline{\mathbb{Q}_{max}[T]}$ and $\nu:M\to \mathbb{T}$ be a hyperfield valuation which is not trivial. Suppose that $\nu$ is trivial on $\mathbb{Q}_{max}$ and $\nu(\overline{T})=0$. Let
\[
\mathfrak{p}_\nu:=\{f \in M \mid \nu(f) <0\}.
\]
Then $\mathfrak{p}_\nu$ is a prime ideal. 
\end{lem}
\begin{proof}
For any $f,g \in \mathfrak{p}_\nu$, we have $\nu(f\oplus g) \in \nu(f) \boxplus \nu(g)$. But, since $\nu(f)$ and $\nu(g)$ are negative, $\nu(f\oplus g) <0$ and hence $f \oplus g \in \mathfrak{p}_\nu$. Next, let $h \in M$. We claim that if $h \not \in \mathfrak{p}_\nu$ then $\nu(h)=0$. Indeed, we may assume that $h=a\oplus  b\odot \overline{T}$. Hence, $\nu(h)=\nu(a\oplus b \odot \overline{T})$. Since $\nu(\overline{T})=0$ and $\nu$ is trivial on $\mathbb{Q}_{max}$, we have $\nu(h) \in 0 \boxplus (0 \boxdot 0)$, i.e., $\nu(h) \leq 0$. Since $h \not \in \mathfrak{p}_\nu$, this implies that $\nu(h)=0$. Now, if $h \not \in \mathfrak{p}_\nu$ and $f \in \mathfrak{p}_\nu$, then $\nu(h\odot f)=\nu(h)\boxdot \nu(f)=0+\nu(f)=\nu(f) <0$ and hence $h\odot f \in \mathfrak{p}_\nu$. If both $f,h \in \mathfrak{p}_\nu$, then clearly $h\odot f \in \mathfrak{p}_\nu$. Finally, suppose that $f \odot g \in \mathfrak{p}_\nu$. In other words, $\nu(f \odot g) <0$. This implies that $\nu(f) <0$ or $\nu(g) <0$ since $\nu(f\odot g)=\nu(f)+\nu(g)$. This proves that $\mathfrak{p}_\nu$ is a prime ideal. 
\end{proof}

\begin{lem}\label{lemma1}
Let $M:=\overline{\mathbb{Q}_{max}[T]}$ and $\nu:M\to \mathbb{T}$ be a hyperfield valuation which is not trivial, but $\nu$ is trivial on $\mathbb{Q}_{max}$ and $\nu(\overline{T})=0$. Suppose that $\mathfrak{p}_\nu$ is generated by $g=a\oplus b\odot \overline{T}$ with $a \neq -\infty$. Then $\nu$ is uniquely determined by $\nu(g)$. 
\end{lem}
\begin{proof}
Suppose that $h \not \in \mathfrak{p}_\nu$. In Lemma \ref{primelemma}, we proved that in this case  $\nu(h)=0$. Now, suppose that $f \in \mathfrak{p}_\nu-\{-\infty\}$. Then we can write $f=b\odot g^{\odot n}\odot h$ with $b \in \mathbb{Q}_{max}$ and $h \not \in \mathfrak{p}_\nu$. This forces $\nu(f)=n\nu(g)$ since $\nu(b)=\nu(h)=0$. 
\end{proof}

\begin{lem}\label{lemma2}
Let $M:=\overline{\mathbb{Q}_{max}[T]}$, $t$ a negative real number, and $g:=a\oplus b\odot \overline{T} \in M$ with $a \neq -\infty$. Let $\nu:M \to \mathbb{T}$ be a function such $\nu(f)=nt$, where $n$ is the largest natural number such that $g^n$ divides $f$ and $\nu(0_M)=-\infty$. Then $\nu$ is a hyperfield valuation.  
\end{lem}
\begin{proof}
From the fundamental theorem of tropical algebra, we have $\nu(f\odot g)=\nu(f)\boxdot \nu(g)$. Now, let $f=g^xf'$ and $h=g^yh'$ such that $g$ can not divide both $f'$ and $h'$. From Lemma \ref{g_blemma}, we have that $\min\{r_f,r_g\} \leq r_{f\oplus g}$ and since $t <0$,
\[
t\cdot (r_{f \oplus g}) \leq \min\{x,y\}\cdot t=\max\{x\cdot t,y\cdot t\} \iff \nu(f\oplus g) \leq \max \{\nu(f),\nu(g)\}. 
\]
It follows that $\nu(f\oplus g) \in \nu(f)\boxplus \nu(g)$. This completes the proof. 
\end{proof}

\begin{cor}\label{equicor}
Let $M:=\overline{\mathbb{Q}_{max}[T]}$. Suppose that $\nu_1$ and $\nu_2$ are hyperfield valuations on $M$ which is trivial on $\mathbb{Q}_{max}$ and $\nu_i(\overline{T})=0$ for $i=1,2$. If $\mathfrak{p}_{\nu_i}$ is generated by $g \in M$ for both $i=1,2$, then $\nu_1$ and $\nu_2$ are equivalent. 
\end{cor}
\begin{proof}
This is clear from the proof of Lemma \ref{lemma1}.  
\end{proof}

\begin{lem} \label{principlelemma}
Let $M:=\overline{\mathbb{Q}_{max}[T]}$. Let $\nu:M\to\mathbb{T}$ be a hyperfield valuation on $M$ which is trivial on $\mathbb{Q}_{max}$ and $\nu(\overline{T})=0$. Then $\mathfrak{p}_\nu=<g>$ for some linear polynomial $g \in M$. 
\end{lem}
\begin{proof}
Since any element $f$ in $M$ can be uniquely factored into linear polynomials, it is enough to show that any two linear polynomials in $\mathfrak{p}_\nu$ is an non-zero constant multiple of each other. Let $f=a\oplus b\odot \overline{T}$ and $h=x \oplus y \odot \overline{T}$ be elements of $\mathfrak{p}_\nu$. In particular, $a,x \neq -\infty$ since $\nu(\overline{T})=0$, but $\nu(f),\nu(h) <0$. We can write $f=t\odot h \oplus q$ for some $t,q \in \mathbb{Q}_{\max}$ and $t \neq -\infty$. If $q=-\infty$, then there is nothing to prove. Suppose that $q \neq -\infty$. Then we have, since $\nu(h) <0$, 
\[
\nu(f)=\nu(t\odot h \oplus q) \in \nu(t)\boxdot \nu(h) \boxplus \nu(q)=\nu(h) \boxplus 0 =0
\] 
This is absurd since $\nu(f) <0$ and hence $q=-\infty$. This proves that $f$ is a non-zero constant multiple of $h$. 
\end{proof}

\begin{lem}\label{onlyonelem}
Let $M:=\overline{\mathbb{Q}_{max}[T]}$ and $g_b:=0\oplus b\odot \overline{T}$ for $b \in \mathbb{Q}$. If $c \in \mathbb{Q}$ and $g_c=r\odot g_b$ for some $r \in \mathbb{Q}_{max}$, then $r=0$. In other words, $b=c$.  
\end{lem}
\begin{proof}
If $b=c$, then there is nothing to prove. Suppose that $b \neq c$. We may assume that $c <b$. Since $g_c=r\odot g_b$, we have
\begin{equation}\label{eqeq}
\max\{0,c+t\}=r+\max\{0,b+t\}=\max\{r,r+b+t\}, \quad \forall t \in \mathbb{Q}_{max}.
\end{equation} 
Let $t=-c$. Then \eqref{eqeq} becomes
\[
\max\{0,0\}=0=\max\{r,r+(b-c)\}=r+(b-c)
\]
This implies that $r=c-b$. Now, when $t=-b$, \eqref{eqeq} becomes
\[
\max\{0,c-b\}=0=\max\{r,r+0\}=r. 
\]  
This implies that $r=0=c-b$ and hence $c=b$. 
\end{proof}

\begin{pro}\label{valR_valfunctionfield}
Let $M:=\overline{\mathbb{Q}_{max}[T]}$, $S:=M \backslash \{\overline{-\infty}\}$, and $\mathbb{Q}_{max}(T):=S^{-1}M$. Then, the set of the equivalence classes of non-trivial hyperfield valuations on $\mathbb{Q}_{max}(T)$ which are trivial on $\mathbb{Q}_{max}$ is equal to $\mathbb{Q}\cup \{-\infty\} \cup \{\infty\}$. 
\end{pro}
\begin{proof}
From Proposition \ref{valuefraction}, a hyperfield valuation $\nu$ on $\mathbb{Q}_{max}(T)$ only depends on values of $\nu$ on $M$ and $\nu(M-\{0_M\})$ should be a subset of $\mathbb{R}$. Let $\nu$ be a hyperfield valuation on $\mathbb{Q}_{max}(T)$ which is trivial on $\mathbb{Q}_{max}$. For $\overline{f(T)} \in M\backslash \{-\infty\}$, from the fundamental theorem of tropical algebra, we have 
\[\overline{f(T)}=\overline{l_0(T)}\odot\overline{l_1(T)}\odot \overline{l_2(T)}\odot...\odot \overline{l_n(T)},\] 
where $l_0(T) \in \mathbb{Q}$ and $l_i(T)=a_i\odot T\oplus b_i$ for some $a_i\in \mathbb{Q}, b_i \in \mathbb{Q}_{max}$. As in Proposition \ref{semiringhomvalfunctionfield} we may assume that $l_0(T)=0$. Hence, $\nu$ is entirely determined by its values on linear polynomials. Similar to Proposition \ref{semiringhomvalfunctionfield}, we divide the cases up to a sign of $\nu(\overline{T})$. The first case is when $\nu(\overline{T})<0$. Since $\nu$ is trivial on $\mathbb{Q}_{max}$, if $b \neq -\infty$, we have
\[\nu(a\odot \overline{T}\oplus b) \in \nu(a\odot \overline{T})\boxplus  \nu(b)=(\nu(a)\boxdot \nu(\overline{T}))\boxplus \nu(b)=\max\{\nu(\overline{T}),0\}=0.\]
With the same notation as in Lemma \ref{v(T)<0}, we have \begin{equation}\label{valucondition}
\nu(\overline{f(T)})=r_f \nu(\overline{T}).
\end{equation}
Conversely, any map $\nu:\mathbb{Q}_{max}(T) \longrightarrow \mathbb{T}$ given by \eqref{valucondition} is a hyperfield valuation when $\nu(\overline{T})<0$. Indeed, from the fundamental theorem of tropical algebra, we have \[\nu(\overline{f(T)}\odot \overline{g(T)})=(r_f+r_g)\nu(\overline{T})=r_f\nu(\overline{T})+r_g\nu(\overline{T})=\nu(\overline{f(T)}) \boxdot \nu(\overline{g(T)}).
\] 
Moreover, from Lemma \ref{v(T)<0}, we have \[\nu(\overline{f(T)} \oplus \overline{g(T)})=r_{(f\oplus g)} \nu(\overline{T})=\min\{r_f,r_g\}\nu(\overline{T})=\max\{r_f\nu(\overline{T}),r_g\nu(\overline{T})\}
\]
\[=\max\{\nu(\overline{f(T)}),\nu(\overline{g(T)})\} \in \nu(\overline{f(T)}) \boxplus \nu(\overline{g(T)}).\] Similar to Proposition \ref{semiringhomvalfunctionfield}, all these cases are equivalent.\\ 
The second case is when $\nu(\overline{T})=0$. Suppose first that $\mathfrak{p}_\nu =\emptyset$ then $\nu$ is a trivial valuation. Indeed, for any $f=a\oplus b\odot \overline{T}$, we have 
\[
\nu(f)=\nu(a\oplus b \odot \overline{T}) \in \nu(a)\boxplus \nu(b)\boxdot \nu(\overline{T})=0\boxplus 0.
\]
This implies that $\nu(f) \leq 0$, however, since $\mathfrak{p}_\nu=\emptyset$, we conclude that $\nu(f)=0$.\\
Now, suppose that $\mathfrak{p}_\nu$ is not empty. It follows from Lemmas \ref{primelemma} and \ref{principlelemma} that $\mathfrak{p}_\nu$ is generated by a single linear polynomial $g=0\oplus b\odot \overline{T} \in M$, where $b \in \mathbb{Q}_{max}-\{-\infty\}$. Furthermore, it follows from Lemma \ref{onlyonelem} that $b$ is uniquely determined. Finally, from Corollary \ref{equicor}, any two such hyperfield valuations $\nu_1$ and $\nu_2$ are equivalent if $\mathfrak{p}_{\nu_1}=\mathfrak{p}_{\nu_2}$. This implies that there exists one-to-one correspondence between the set of the equivalence classes in this case and $\mathbb{Q}$. Clearly, any of this is not equivalent to the first case.\\ 
The final case is when $\nu(\overline{T})>0$. Then, as in Proposition \ref{semiringhomvalfunctionfield}, we have $\nu(\overline{f(T)})=\deg (\overline{f(T)})(\nu(\bar{T}))$. Conversely, any map $\nu:\mathbb{Q}_{max}(T) \longrightarrow \mathbb{T}$ given in this way is indeed a hyperfield valuation by Lemma \ref{degree}. These are all equivalent from the exact same argument in Proposition \ref{semiringhomvalfunctionfield}.\\
To summarize, we can identify the equivalence class of the case $\nu(\overline{T})>0$ to $\infty$ and the equivalence class of the case $\nu(\overline{T})<0$ to $-\infty$ and the equivalence classes with $\mathfrak{p}_\nu \neq \emptyset$ and $\overline{T}=0$ to $\mathbb{Q}$. Then we obtain our desired bijection from the set of equivalence classes of non-trivial hyperfield valuations on $\mathbb{Q}_{max}(T)$ which are trivial on $\mathbb{Q}_{max}$ to the set $\mathbb{Q} \cup\{-\infty\}\cup \{\infty\}$. 
\end{proof}

\begin{rmk}
In Propositions \ref{semiringhomvalfunctionfield} and \ref{valR_valfunctionfield}, the value groups of non-trivial valuations are all isomorphic to $\mathbb{Z}$ and hence they are all discrete valuations. 
\end{rmk}

\begin{rmk}
One may notice that Proposition \ref{valR_valfunctionfield} is very similar to the Berkovich affine line over a trivially valued field. 
\end{rmk}

\section{Geometric aspects of valuations}
In this section, we consider the abstract curve associated to the function field $\mathbb{Q}_{max}(T)$ by using strict valuations as well as hyperfield valuations. To be precise, the abstract curve, as the set of strict valuations on $\mathbb{Q}_{max}(T)$, yields the projective line $\mathbb{P}^1_{\mathbb{F}_1}$ over the field with one element. On the other hand, when we employ hyperfield valuations, we recast the tropical projective line over $\mathbb{Q}_{max}$, where we consider the tropical projective line over $\mathbb{Q}_{max}$ as the set $\mathbb{Q}_{max}\cup \{\infty\}$ which is homeomorphic to $[0,1] \cap \mathbb{Q}$ with the Euclidean topology. \\

Let us recall the definition of the abstract nonsingular curve $C_K$ associated to a function field $K$ over an algebraically closed field $k$. As a set $C_K$ consists of equivalence classes of discrete valuations on $K$ which are trivial on $k$. Equivalently, $C_K$ can be considered as the set of all discrete valuation rings of $K$ that are trivially valued on $k$. Note that the similar results for idempotent semifields are proved in Corollary \ref{equihassamerings}.\\
Classically, one imposes a topology on $C_K$ by defining the closed sets to be the finite subsets and $C_K$. The structure sheaf $\mathcal{O}_{C_K}$ on $C_K$ is defined as follows: for each open subset $U$ of $C_K$, 
\begin{equation}\label{sheaf}
\mathcal{O}_{C_K}(U):= \bigcap_{R \in U} R, 
\end{equation}
where $R$ is a discrete valuation ring on $K$ (which is trivial on $k$) considered as an element of $C_K$. This intersection makes sense since each $R$ is a subring of $K$ which contains $k$. \\
Now we consider the semiring case. Let $K:=\mathbb{Q}_{max}(T)$ and $k:=\mathbb{Q}_{max}$, where $\mathbb{Q}_{max}(T)$ is the semifield which we defined in \S \ref{valuationofQmax}. As we mentioned earlier, since any element of $\mathbb{Q}_{max}[T]$ has a tropical solution in $\mathbb{Q}_{max}$, we may consider $\mathbb{Q}_{max}$ as a tropical replacement of an algebraically closed field $k$. 

\subsection{The abstract curve associated to $\mathbb{Q}_{max}(T)$ via strict valuations}\label{mainthm1}
\vspace{0.2cm}

From Proposition \ref{semiringhomvalfunctionfield}, the direct analogue of the set $C_K$ with $K=\mathbb{Q}_{max}(T)$ and $k=\mathbb{Q}_{max}$ is the set $Val(\mathbb{Q}_{max}(T)):=\{\nu_+,\nu_-\}$, where $\nu_+$ is the equivalence class of strict valuations $\nu$ such that $\nu(\overline{T})>0$ and $\nu_-$ is the equivalent class of strict valuations $\nu$ such that $\nu(\overline{T})<0$. This is because the trivial valuation does not have a value group isomorphic to $\mathbb{Z}$ and hence is not discrete.

\begin{mydef}
Let $Val(\mathbb{Q}_{max}(T))=\{\nu_+,\nu_-\}$ be the set of equivalence classes of discrete strict valuations on $\mathbb{Q}_{max}(T)$ which are trivial on $\mathbb{Q}_{max}$.
We define the \emph{abstract curve associated to $K=\mathbb{Q}_{max}(T)$ over $k=\mathbb{Q}_{max}$} to be a topological space which has the underlying set $Val(\mathbb{Q}_{max}(T))$ with the discrete topology.
\end{mydef}

For the notational convenience, let $X=Val(\mathbb{Q}_{max}(T))$ (considered as a topological space).
\begin{mythm}
The abstract curve $X$ is homeomorphic to the subspace of the projective line $\mathbb{P}^1_{\mathbb{F}_1}$ which consists of all closed points. 
\end{mythm}
\begin{proof}
This directly follows from the previous results. We use the same notation as in \S \ref{char}. Consider the following map (of topological spaces):
\begin{equation}\label{homeo}
\varphi: X \longrightarrow \mathbb{P}^1_{\mathbb{F}_1}, \quad \nu_+ \mapsto q_+,\quad \nu_- \mapsto r_-.
\end{equation}
Then clearly $\varphi$ is a homeomorphism from $X$ to the subspace of $\mathbb{P}^1_{\mathbb{F}_1}$ which consists of all closed points. We note that if we do not impose the condition of valuations being discrete then $\varphi$ is no longer a homeomorphism. Since we have to add the trivial valuation as a closed point, however $\mathbb{P}^1_{\mathbb{F}_1}$ only has two closed points.  
\end{proof}

Next, we consider the structure sheaf on $X$. The topological space $X$ has three nonempty open subsets, namely $U=\{\nu_+\}$, $V=\{\nu_-\}$, and $X$ itself. Let $R_+$ be the valuation ring corresponding to $\nu_+$ and $R_-$ be the valuation ring corresponding to $\nu_-$. This makes sense since any two equivalent (classical, strict, hyperfield) valuations on an idempotent semifield have the same valuation ring from Corollary \ref{equihassamerings}. If we directly generalize the classical definition \eqref{sheaf}, we obtain the following:
\[
\mathcal{O}_X(U)=R_+, \quad \mathcal{O}_X(V)=R_-, \quad \mathcal{O}_X(X)=R_+ \cap R_-.
\]
More precisely, we have
\[
R_+=\{\frac{f}{g} \in \mathbb{Q}_{max}(T) \mid \deg (f) - \deg(g) \leq 0\}\cup \{-\infty\},
\]
\[
R_-=\{\frac{f}{g} \in \mathbb{Q}_{max}(T) \mid r_g -r_f \leq 0\}\cup \{-\infty\},
\]
where $r_f$ is defined in $\S \ref{valuationofQmax}$. We note that $R_+$ (resp. $R_-$) has a unique maximal ideal $\mathfrak{m}$ (resp. $\mathfrak{n}$) and if we localize $R_+$ (resp. $R_-$) at $\mathfrak{m}$ (resp. $\mathfrak{n}$) then we obtain $\mathbb{Q}_{max}(T)$. \\
In the spirit of the classical result, one may hope that the set of valuations $X=Val(\mathbb{Q}_{max}(T))$ gives some geometric information about the projective line $\mathbb{P}^1_{\mathbb{Q}_{max}}$ over $\mathbb{Q}_{max}$. However, one can observe that $\mathbb{A}^1:=\Spec (\overline{\mathbb{Q}_{max}[T]})$ already contains a lot more points that $X$. For example, in \cite{noah}, the authors proved that there is an one-to-one correspondence between principle prime ideals of $\overline{\mathbb{Q}_{max}[T]}$ and points of $\mathbb{Q}_{max}$. Hence, the points of the projective line $\mathbb{P}^1_{\mathbb{Q}_{max}}$ over $\mathbb{Q}_{max}$ are a lot more than the points of $Val(\mathbb{Q}_{max}(T))$. This implies that $Val(\mathbb{Q}_{max}(T))$ can not be understood as an abstract curve analogue of $\mathbb{P}^1_{\mathbb{Q}_{max}}$ with strict valuations. In the next subsection, however, we will show that the abstract curve associated to $\mathbb{Q}_{max}(T)$ which is constructed by using hyperfield valuations precisely captures the closed point of the tropical projective line. 
\begin{rmk}\label{absractcurve PROJ}
As we mentioned, the number of closed points of $\mathbb{P}^1_{\mathbb{F}_1}$ is exactly same as the number of points of $Val(\mathbb{Q}_{max}(T))=\{\nu_+,\nu_-\}$. In fact, one may consider that $\nu_0$, which is an equivalence class of the trivial valuation, corresponds to $p_0$ which is the generic point of $\mathbb{P}^1_{\mathbb{F}_1}$. This correspondence can be justified since the classical theorem of abstract nonsingular curves only concerns closed points of a projective nonsingular curve. Hence, at least topologically, one can consider $Val(\mathbb{Q}_{max}(T))$ as an abstract curve analogue of the projective line $\mathbb{P}^1_{\mathbb{F}_1}$.
\end{rmk}
\begin{rmk}
In our construction, $X=Val(\mathbb{Q}_{max}(T))$ is endowed with the discrete topology. This is defined in an ad hoc manner and hence there may be a better definition. Furthermore, contrary to the classical case, the set of global sections $\mathcal{O}_X(X)$ is different from $\mathbb{Q}_{max}$ in this case. On the other hand, we will see that the abstract curve, construct from hyperfield valuations, satisfies this property. 
\end{rmk}

\begin{rmk}
A monoid scheme, first constructed by Deitmar, is essentially toric (see, \cite[Theorem 4.1]{deitmar2008f1}). One of our initial motivations in developing a valuation theory of semirings is to make an analogue of abstract nonsingular curves as we mentioned. It arises from a heuristic idea that an idempotent semiring $\mathbb{R}_{max}[M]$ which is obtained from a monoid $M$ (as a scalar extension, see \cite{noah}) largely depends on $M$, especially when we consider homomorphisms. In particular, we hope that such a theory could shed some light on constructing a positive genus example in $\mathbb{F}_1$-geometry. 
\end{rmk}



\subsection{The abstract curve associated to $\mathbb{Q}_{max}(T)$ via hyperfield valuations}\label{hypervaluationcurve}
\vspace{0.2cm}

As in the strict valuation case, we see from the proof of Proposition \ref{valR_valfunctionfield} that any non-trivial hyperfield valuation $\nu$ on $\mathbb{Q}_{max}(T)$ has the value group isomorphic to $\mathbb{Z}$. Let $X$ be the set of non-trivial hyperfield valuations on $\mathbb{Q}_{max}(T)$ which are trivial on $\mathbb{Q}_{max}$. Then, from Proposition \ref{valR_valfunctionfield}, we have the following set bijection:
\begin{equation}\label{bijection}
X \simeq \mathbb{Q}_{max} \cup \{\infty\}. 
\end{equation}
There are two choices of topology for $X$. First, we can use the cofinite topology as in the classical case; this should be considered as `Zariski topology' since any tropical polynomial with one variable has finitely many tropical solutions. Secondly, by using the set bijection \eqref{bijection}, we may impose Euclidean topology to $X$. In the current paper, we restrict ourselves to the cofinite topology which is coarser than Euclidean topology. We will consider the other case in \cite{jun2017tropicalabstractcurve} to extend the current work. \\

Now, we construct a structure sheaf $\mathcal{O}_X$ for $X$, where $X$ is considered as a topological space with the cofinite topology. It follows from Corollary \ref{equihassamerings} that any two equivalent hyperfield valuations give the same valuation ring and hence there exists a set bijection between $X$ and the set of valuation rings. Hence, for any open subset $U$ of $X$, we define the following:
\[
\mathcal{O}_X(U):=\bigcap_{R \in U} R, 
\]
where we consider $X$ as the set of valuation rings. One can easily see that $\mathcal{O}_X(U)$ is a presheaf. 

\begin{pro}
$\mathcal{O}_X$ is a sheaf on $X$. 
\end{pro}
\begin{proof}
Note that any inclusion of open sets $U \hookrightarrow V$ induces an injection $\mathcal{O}_X(V) \hookrightarrow \mathcal{O}_X(U)$ in this case. Hence, if $s, t \in \mathcal{O}_X(U)$, then we have $s|_{V}=t|_{V}$. Next, suppose that $U=\bigcup_i U_i$. Since $X$ is equipped with the cofinite topology, $U=X-\{b_1,b_2,...,b_t\}$ for some $t \in \mathbb{N}$. Then from our definition, 
\[
\mathcal{O}_X(U)=\{\frac{f}{g} \in \mathbb{Q}_{max}(T) \mid \nu_{b_i}(f) \leq \nu_{b_i}(g)\quad  \forall i=1,...,t\}
\]
One can easily see that in this case, we can glue given sections $s_i \in \mathcal{O}_X(U_i)$ such that $s_i|_{U_i\cap U_j}=s_j|_{U_i\cap U_j}$ $\forall i,j$.  
\end{proof}

Next, we prove that the only global sections of $X$ are constants as in the classical case. Furthermore, the semiring of sections $\mathcal{O}_X(X-\{\infty\})$ is isomorphic to $\overline{\mathbb{Q}_{max}[T]}$. To this end, we describe valuation rings for each case. We have the following three cases. 

\begin{enumerate}
\item
Let $\nu_{>0}$ be a non-trivial hyperfield valuation on $\mathbb{Q}_{max}(T)$, which is trivial on $\mathbb{Q}_{max}$, such that $\nu_{>0}(\overline{T})>0$. Let $R_{>0}$ be the valuation ring of $\nu_{>0}$. Then $\frac{f}{g} \in \mathbb{Q}_{max}(T)$ is an element of $R_{>0}$ if and only if $\deg(f) \leq \deg(g)$.
\item
Let $\nu_{<0}$ be a non-trivial hyperfield valuation on $\mathbb{Q}_{max}(T)$, which is trivial on $\mathbb{Q}_{max}$, such that $\nu_{<0}(\overline{T})<0$. Let $R_{<0}$ be the valuation ring of $\nu_{<0}$. Then $\frac{f}{g} \in \mathbb{Q}_{max}(T)$ is an element of $R_{>0}$ if and only if $r_f\geq r_g$.
\item
For $b \in \mathbb{Q}$, let $\nu_b$ be a non-trivial hyperfield valuation on $\mathbb{Q}_{max}(T)$, which is trivial on $\mathbb{Q}_{max}$, such that $\nu_{b}(\overline{T})=0$ and $\mathfrak{p}_{\nu_b}=<0\oplus b\odot T>$. Let $R_{b}$ be the valuation ring of $\nu_{b}$. Then $\frac{f}{g} \in \mathbb{Q}_{max}(T)$ is an element of $R_{b}$ if and only if $r_b(f)\geq r_b(g)$.
\end{enumerate}

Then we have the following analogous result. 

\begin{pro}\label{globalsectionpro}
$\mathcal{O}_X(X) \simeq \mathbb{Q}_{max}$. 
\end{pro}
\begin{proof}
Clearly, $\mathbb{Q}_{max} \subseteq \mathcal{O}_X(X)$. Suppose that $x=\frac{f}{g} \in \mathcal{O}_X(X)$. We can write 
\[
x=\frac{f}{g}=\frac{T^n\odot l_1^{a_1}\odot \cdots \odot l_t^{a_t} }{T^e\odot q_1^{b_1}\odot \cdots \odot q_m^{b_m}}, \quad n,e,a_i,b_i \geq 0, 
\]
where $l_i$ and $q_i$ are linear polynomials. Since $x \in \mathcal{O}_X(X) \subseteq  R_{<0}$, one can see that $n \geq e$ and hence we may write
\[
x=\frac{T^s\odot l_1^{a_1}\odot \cdots \odot l_t^{a_t} }{ q_1^{b_1}\odot \cdots \odot q_m^{b_m}}, \quad s,a_i,b_i \geq 0, 
\]
Similarly, when we consider $R_b$, one should have $t=m$ and $\{l_1,...,l_t\}=\{q_1,...,q_m\}$ and if $l_i=q_j$ then $a_i \geq b_j$. It follows that
\[
x=\frac{T^s\odot l_1^{c_1}\odot \cdots \odot l_t^{c_t} }{ 1_{\mathbb{Q}_{max}}}, \quad c_i \geq 0. 
\]
Finally, since $x \in R_{>0}$, we should have that $s+c_1+\cdots +c_t \leq 0$ and hence $s=c_i=0$ for all $i$ since $s$ and $c_i$ are nonnegative. This implies that $x \in \mathbb{Q}_{max}$. 
\end{proof}

In particular, Proposition \ref{globalsectionpro} implies the following. 

\begin{cor}
Let $U:=X - \{\infty\} (=\mathbb{Q} \cup \{-\infty\})$. Then, \[\mathcal{O}_X(U) \simeq \overline{\mathbb{Q}_{max}[T]}.
\]
\end{cor}

\begin{rmk}
Let $U=X-\{x_1,...,x_t\}$ such that $x_i$ are points different from $\infty$ and $-\infty$. Then one may consider $\frac{f}{g} \in \mathcal{O}_X(U)$ as a piecewise linear function which can be defined for all $\mathbb{Q}$ but $\{x_1,...,x_t\}$. 
\end{rmk}

\subsection{Tropical curves and Analytic spaces}\label{tropicalcurve}
\subsubsection{Valuations and tropical curves}
A strict valuation is a homomorphism of semirings which was first introduced in \cite{izhakian2011supertropical} (or see \cite{izavalue} for a brief version). Together with tropical scheme theory introduced by J.Giansiracusa and N.Giansiracusa in \cite{noah}, this leads to some interesting observation. Let us first consider the following example. 

\begin{myeg}\label{easyexample}
Let $S=\mathbb{Q}_{max}[x^{\pm},y^{\pm}]$ be the semiring of Laurent polynomials in two variables with coefficients in $\mathbb{Q}_{max}$. Let $f(x,y)=x+y+3$ and $\tilde{S}:=S/\mathcal{B}(f)$ be a semiring constructed from $S$ and $f$ as in \cite{noah}. The tropical variety $Trop(f)$ defined by $f$ (in $\mathbb{R}^2$) consists of three rays as follows:
\[
X_1=\{(x,x) \mid 3 \leq x, x \in \mathbb{R}\},\quad X_2=\{(3,y) \mid y \leq 3, y \in \mathbb{R}\},
\] 
\[
X_3=\{(x,3) \mid x \leq 3, x \in \mathbb{R}\}.
\]
It follows from \cite{noah} that $X_1 \cup X_2 \cup X_2$ should be $\Hom(\tilde{S},\mathbb{R}_{max})$ and hence the set $X_{strict}$ of strict valuations on $\tilde{S}$ which extend the injection $i:\mathbb{Q}_{max} \hookrightarrow \mathbb{R}_{max}$ is same as the set $(X_1\cup X_2 \cup X_3)$. One can further observe that any two strict valuations $\nu_1$ and $\nu_2$ are equivalent if and only if both are in the same ray. In other words, each equivalence class of strict valuations on $\tilde{S}$ corresponds to a linear piece of $Trop(f)$. 
\end{myeg} 

In general, we observe the following:
\begin{itemize}
\item
Suppose that $X$ is a curve in a torus $T^n$ over a non-Archimedean field $k$. Let $X_{trop}$ be a scheme-theoretic tropicalization (as in \cite{noah}) of a curve $X$ and $S=\mathcal{O}_{X_{trop}}(X_{trop})$ be the semiring of global sections. Then it follows from the results in \cite{noah} that the (set-theoretic) tropicalization $Trop(X)$ of $X$ is equal to $X_{trop}(\mathbb{R}_{max})=\Hom(S,\mathbb{R}_{max})=X_{strict}$. 
\item
As one can see from the Example \ref{easyexample}, each equivalence class of strict valuations may correspond to a linear piece of a tropical curve. 
\end{itemize} 

\begin{rmk}
A strict valuation was first introduced in \cite{izhakian2011supertropical}. However, since it predates the paper \cite{noah}, we do not know whether the authors of \cite{izhakian2011supertropical} knew their definition of a strict valuation could give another interpretation of tropical varieties.  
\end{rmk}

Let $SVal(S)$ and $HVal(S)$ be the set of strict and hyperfield valuations respectively on an idempotent semiring $S$. It follows from Proposition \ref{generator} that $SVal(S)$ can be naturally considered as a subset of $HVal(S)$. Let us first consider the following example.

\begin{myeg}\label{secondexample}
Let $S$, $f(x,y)$, and $\tilde{S}$ be same as Example \ref{easyexample}. Let $SVal(\tilde{S})$ and $HVal(\tilde{S})$ be the set of strict valuations and hyperfield valuations respectively which extend the injection $i:\mathbb{Q}_{max} \hookrightarrow \mathbb{R}_{max}$. Consider the following map:
\begin{equation}\label{hypermap}
\varphi: HVal(\tilde{S}) \longrightarrow \mathbb{R}^2, \quad \nu \mapsto (\nu(x),\nu(y)).
\end{equation}
We claim that the image $\Img(\varphi)$ of $\varphi$ in $\mathbb{R}^2$ is exactly the tropical line $Trop(f)$ defined by $f(x,y)$. In fact, it follows from Proposition \ref{generator} and Example \ref{easyexample} that $\Img(\varphi)$ contains $Trop(f)$. Thus we show that there is no more. Let $\nu$ be a hyperfield valuation on $\tilde{S}$. We use the notation $\oplus$ for the addition of $\mathbb{T}$. We may consider $\nu$ to be a hyperfield valuation on $S$ such that
\begin{equation}\label{condition}
\nu(x+y+3)=\nu(x+y)=\nu(x+3)=\nu(y+3).
\end{equation}
Let us first consider the case when $\nu(x)=\nu(y)$. If $\nu(x) < 3$ then this implies that 
\[
\nu(x+3) \in \nu(x)\oplus\nu(3) =\nu(x)\oplus 3=3. 
\]
On the other hand, we have
\[
\nu(x+y)\in \nu(x) \oplus \nu(y) = \left[-\infty,\nu(x)\right].
\]
Therefore, we should have $3 \in \left[-\infty,\nu(x)\right]$ from the condition \eqref{condition}. However, this only happens when $ 3\leq \nu(x)$. This gives a contradiction. Hence we should have $3 \leq \nu(x)=\nu(y)$.\\
The second case is when $\nu(x) > \nu(y)$. In this case we have 
\[
\nu(x+y)=\nu(x)\oplus \nu(y)=\nu(x).
\]
We should also have $\nu(x+3) \in \nu(x) \oplus \nu(3)$ and hence $\nu(x) \in \nu(x) \oplus 3$. This is possible only when $3 \leq \nu(x)$. Furthermore, since $\nu(x)=\nu(y+3) \in \nu(y) \oplus 3$ and $\nu(y) < \nu(x)$, we should have $\nu(x)=3$. Therefore, in this case, we obtain $\nu(y) < \nu(x)=3$.\\
The last case when $\nu(x) < \nu(y)$ is similar to the second case and hence we have $\nu(x) < \nu(y)=3$. By combining the three cases, we obtain $Trop(f)$ as the image of $\varphi$. 
\end{myeg}

We remark the following:
\begin{itemize}
\item
In \cite{Henry}, S.Henry introduced a notion of an additive map from monoids to hypergroups to realize commutative monoids as a part of hypergroups. A hyperfield valuation may be considered as a generalization of an additive map from monoids to semirings. 
\item
In the view of Example \ref{secondexample}, one may expect that J.Giansiracusa and N.Giansiracusa's tropical scheme theory can be generalized by including the hyperfield $\mathbb{T}$ together with either additive maps of \cite{Henry} or hyperfield valuations. We, however, do not pursue this idea in this paper. 
\end{itemize}

\subsubsection{Hyperfield valuations and analytic spaces}

Our definition of hyperfield valuations provides new perspective on Berkovich's theory of analytic spaces through the theory of hyperring schemes which has been investigated by the author in \cite{jun2015algebraic}.  \\
Let $A$ be a commutative ring. In this case, a hyperfield valuation $\nu:A\to \mathbb{T}$ is just a homomorphism of hyperrings (by considering $A$ and $\mathbb{T}$ in the category of hyperrings). Note that this is not the case for semirings since semirings and hyperrings are not in the same category. It follows that the set of hyperfield valuations on $A$ is the set $\Hom(A,\mathbb{T})$ of homomorphisms of hyperrings.\\
In \cite{baker2016matroids}, Baker and Bowler made the following observation:
\begin{obs}(\cite[Example 4.3]{baker2016matroids})\label{obs}
Let $A$ be a commutative ring. Then a homomorphism (of hyperrings) $\nu:A \to \mathbb{T}$ is the same thing as a prime ideal $\mathfrak{p} \in \Spec A$ together with a real valuation on $\Frac(A/\mathfrak{p})$. In other words, there is a canonical bijection between the set $\Hom(A,\mathbb{T})$ of homomorphisms and the Berkovich spectrum $\mathcal{M}(A)$ of $A$. It follows that the functor $\mathcal{M}$ is representable by the tropical hyperfield $\mathbb{T}$.    
\end{obs}

By implementing the notion of hyperring schemes in \cite{jun2015algebraic}, the above observation can be rephrased in the following way: the Berkovich spectrum $\mathcal{M}(A)$ of $A$ can be identified with the set $\Hom(\Spec \mathbb{T},\Spec A)=\Hom(A,\mathbb{T})$ of `$\mathbb{T}$-rational points' of an affine scheme $\Spec A$, where $\Hom(\Spec \mathbb{T},\Spec A)$ is the set of morphisms of locally hyperringed spaces. In fact, one can globalize Observation \ref{obs}. Furthermore, since any totally ordered abelian group $\Gamma$ can be enriched to a hyperfield (see, \cite{viro}), we can consider a higher rank valuation in a similar manner. We will pursue this idea in \cite{jun2016berkovich} in connection to tropical geometry. 

\begin{rmk}
In \cite{con3}, Connes and Consani proved that the functor $\Spec$ is representable by the Krasner hyperfield $\mathbf{K}$. Precisely, they proved the following theorem: Let $X$ be a scheme over $\mathbb{Z}$. Then there exists a bijection (of sets):
\begin{equation}\label{representable}
X=\Hom(\Spec\mathbf{K},X),
\end{equation}
where $\Hom(\Spec\mathbf{K},X)$ is the set of morphisms of locally hyperringed spaces. This theorem can be considered as the trivial valuation case of the $\mathcal{M}(A)$.
\end{rmk}

\bibliography{Val_Semirings}\bibliographystyle{alpha}
\end{document}